\title{The Ginibre ensemble and Gaussian analytic functions}
\author{
Manjunath Krishnapur \ \and   B\'alint
Vir\'ag}
\date{\today}
\theoremstyle{plain}
    \newtheorem{theorem}{Theorem}
    \newtheorem{fact}[theorem]{Fact}
    \newtheorem{lemma}[theorem]{Lemma}
    \newtheorem{corollary}[theorem]{Corollary}
\theoremstyle{definition} 
    \newtheorem{remark}[theorem]{Remark}
\theoremstyle{remark} 
\def\given{\left.\vphantom{\hbox{\Large (}}\right|}
\newcommand\mnote[1]{} 
\newcommand\be{\begin{equation}}
\newcommand\ee{\end{equation}}
\newcommand{\comment}[1]{}
\newcommand{\eps}{\varepsilon}
\newcommand{\CC}{{\mathbb C}}
\newcommand{\ev}{{\rm \bf  E}}
\newcommand{\one}{{\mathbf 1}}
\newcommand{\sm}{{\raise0.3ex\hbox{$\scriptstyle \setminus$}}}
\newcommand{\lcirc}{{\raise-0.15ex\hbox{$\scriptscriptstyle \circ$}}}
\newcommand{\g}{\mathcal N}
\newcommand{\RR}{{\mathbb R}}
\newcommand{\lstar}{{\raise-0.15ex\hbox{$\scriptstyle \ast$}}}
\renewcommand{\Re}{\operatorname{Re}}
\begin{document}
\maketitle \label{fig1}
\let\thefootnote\relax\footnotetext{M.K.  was supported in part by DST and UGC through DSA-SAP-PhaseIV.}

\begin{abstract}
We show that as $n$ changes, the characteristic polynomial of the
$n\times n$ random matrix with i.i.d.\ complex Gaussian entries
can be described recursively through a process analogous to
P\'olya's urn scheme. As a result, we get a random analytic
function in the limit, which is given by a mixture of Gaussian
analytic functions. This gives another reason why the zeros of
Gaussian analytic functions and the Ginibre ensemble exhibit
similar local repulsion, but different global behavior. Our
approach gives new explicit formulas for the limiting analytic
function.

\end{abstract}
\begin{center}
\includegraphics*[width=300pt]{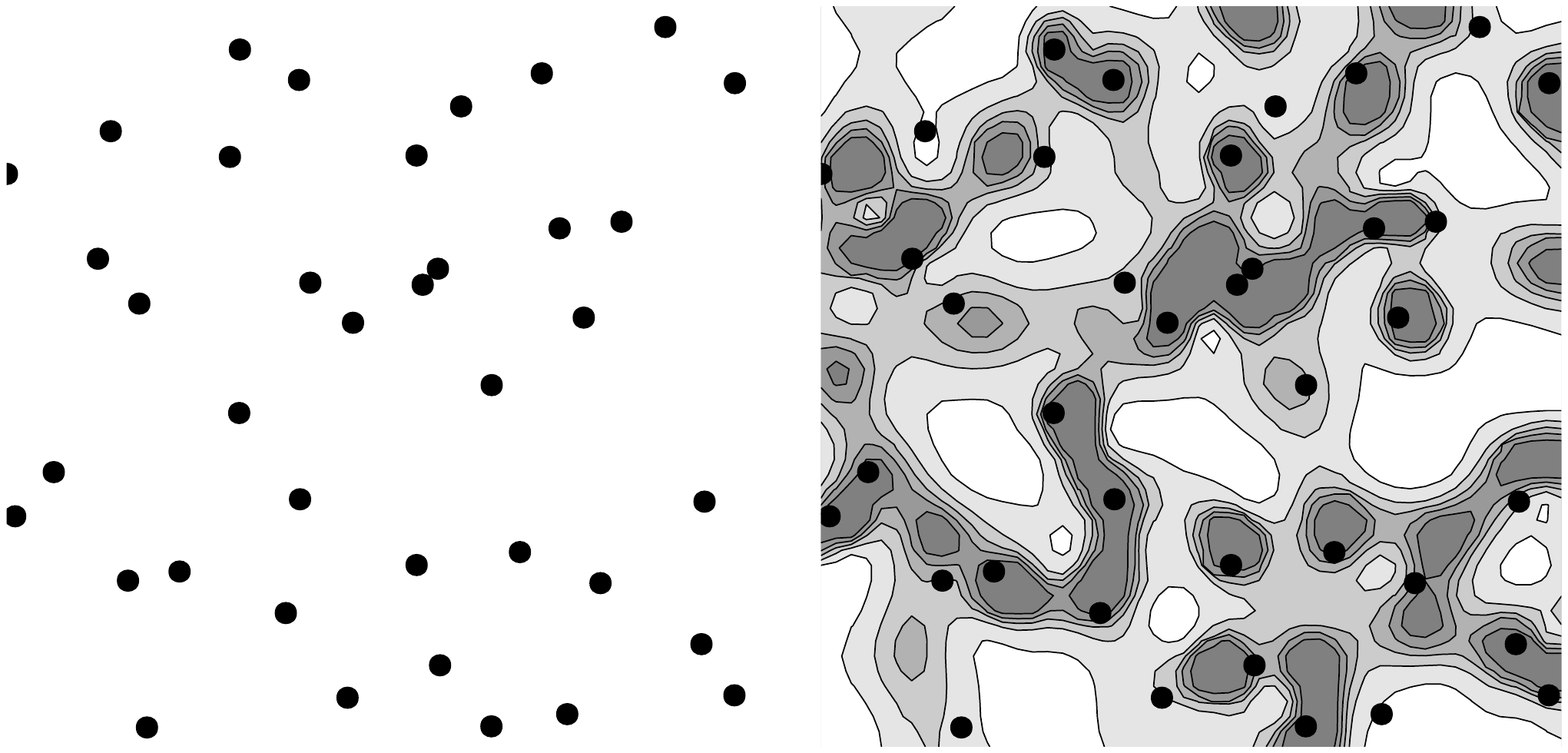}
\\
Ginibre points and their random intensity measure
\bigskip
\end{center}

\section{Introduction}

In studies of random point sets in the complex plane, two
canonical examples have emerged, sharing some features but
differing in others.

The first is the {\bf infinite Ginibre ensemble}, attained
as the limit of the {\bf finite Ginibre ensembles}, i.e.\
the set of eigenvalues of  $n\times n$ random matrices 
$A_n$ filled with independent standard complex Gaussian
entries. It can be thought of as the stationary distribution
of a system of particles performing planar Brownian motions but repelling each 
other through a drift given by their inverse distance.

The second is the zero set of the random power series $\sum
a_nz^n/\sqrt{n!}$ for independent standard complex
Gaussians $a_n$.  It is an example of a random analytic
function whose values are jointly centered complex
Gaussian, a {\bf Gaussian analytic function} or {\bf GAF},
for short. \cite{sodin} has shown that the intensity measure (under which the measure of a Borel subset of $\mathbb C$ is the expected number of zeros that fall in the set) of the zeros
 of such a function in fact determines the entire distribution of zeros. The
above power series, called {\bf planar GAF}, has intensity
measure given by a constant multiple of the Lebesgue measure. Sodin's theorem implies
 that this is the only Gaussian analytic function whose  zero set has a translation-invariant
distribution on $\mathbb C$.

Beyond translation invariance, these two processes share
some local properties. For example, in both cases we have
$$
P(\mbox{two points in a fixed disk of radius } \eps) \asymp
\eps^6
$$
where four of the six in the exponent come from the square of the area of the disk and the extra two comes from quadratic repulsion. Contrast this with Poisson process which only has $\eps^4$. This behavior is shared by all non-degenerate GAFs, see the work of \cite{NazarovSodin} on universality.

On the global scale, however, the two models are very
different. For smooth, compactly supported $\varphi:\CC\to
\RR$ with total integral zero we have two central limit 
theorems
$$ 
\begin{array}{lccc}
\mbox{for Ginibre:} &\sum_z
\varphi(z/n) &\Rightarrow&N(0,\frac{1}{4\pi}\|\nabla  \varphi\|^2),\\
\mbox{for planar GAF:} &
n \ \sum_z \varphi(z/n) &\Rightarrow&
N(0,c\|\Delta \varphi\|^2),
\end{array}
$$
where the sum is over all points of the processes. These results are due to 
\cite{RV} and \cite{ST1}, respectively.

The strikingly different central limit theorems show that the
global behavior of the two random point processes are very
different.

The goal of this paper is to prove a theorem which clarifies why this
phenomenon happens. 
We study the distributional limit of the
characteristic polynomial $Q_n(z)$ of $A_n$. It is known (see \cite{girko} or  \cite{kostlan})  that $|Q_n(0)|$, the absolute value of the determinant, has
the same distribution as the product of independent $\mbox{Gamma}(i,1)$ 
random variables with $i=1,\ldots, n$. Since $\log( \mbox{Gamma}(i,1))$ has variance asymptotic to $1/i$, we see that $\log |Q_n(0)|$, centered and  
divided  by $\sqrt{\log n}$ converges in distribution to a standard normal random variable.

A simple consequence of this fact is that there are no
constants $a_n, b_n$ so that the random variable
$(Q_n(0)-a_n)/b_n$ converges in law to a non-degenerate
limit. In contrast, we prove the following. Note that the convergence in distribution here is with respect to the topology of uniform convergence on compact subsets of the complex plane.

\begin{theorem}\label{thm:convergenceofQn}
There exist positive random variables $A_n$ so that  the 
normalized characteristic polynomial $ Q_n(z)/A_n$ converges in
distribution to a random analytic function $Q(z)$.

Moreover, there exists a random positive definite Hermitian
function $K(z, w)$ so that given $K$ the function $Q(z)$ is a
Gaussian field with covariance kernel $K$. Further, $K$ is analytic in $z$ and anti-analytic in $w$, hence $Q$ is a GAF conditional on $K$.
\end{theorem}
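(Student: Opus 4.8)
The plan is to couple all the $A_n$ by taking them to be the top‑left $n\times n$ corners of a single infinite array of i.i.d.\ standard complex Gaussians, and to analyze the resulting recursion for $Q_n$ — this is the ``urn'' of the abstract. Expanding $\det(zI_n-A_n)$ along the last row and column (Schur complement) gives
$$
Q_n(z)=z\,Q_{n-1}(z)-B_n(z),\qquad
B_n(z)=Q_{n-1}(z)\bigl(a_{nn}+r_n^{\top}(zI_{n-1}-A_{n-1})^{-1}c_n\bigr),
$$
where $c_n$ and $r_n$ are the fresh column and row adjoined at step $n$. Because $Q_{n-1}(z)(zI_{n-1}-A_{n-1})^{-1}=\operatorname{adj}(zI_{n-1}-A_{n-1})$, the term $B_n$ is in fact a \emph{polynomial} of degree $\le n-1$; and conditionally on $\mathcal F_{n-1}:=\sigma(A_{n-1},c_n)$ it is a centred, circularly symmetric complex Gaussian polynomial with the explicit, $\mathcal F_{n-1}$‑measurable covariance kernel
$$
\widetilde\kappa_n(z,w)=Q_{n-1}(z)\,\overline{Q_{n-1}(w)}\Bigl(1+\bigl\langle (wI_{n-1}-A_{n-1})^{-1}c_n,\ (zI_{n-1}-A_{n-1})^{-1}c_n\bigr\rangle\Bigr),
$$
a positive semidefinite polynomial kernel, analytic in $z$ and anti‑analytic in $w$ (the apparent poles cancel against $Q_{n-1}$). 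Since $\mathcal F_{n-1}\subseteq\mathcal F_n$ and the fresh data $(r_n,a_{nn},c_{n+1})$ is independent of $\mathcal F_{n-1}$, the sequence $(B_n)_{n\ge1}$ is a conditionally Gaussian increment process whose next law is measurable with respect to the current state — the precise analogue of P\'olya's rule.

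Step two is to construct the normalizers $A_n$ and prove convergence of $M_n:=Q_n/A_n$. The $A_n$ must be random: $\log|Q_n(z)|=\log|\det(zI_n-A_n)|$ has fluctuations of size $\sqrt{\log n}\to\infty$ for each fixed $z$ (classical CLT for $\log|\det|$), so no deterministic normalization works, whereas the \emph{differences} $\log|Q_n(z)|-\log|Q_n(z')|$ stay tight. Expanding the product $Q_n(z)=\prod_{k\le n}f_k(z)$ with $f_k(z)=z-a_{kk}-r_k^{\top}(zI_{k-1}-A_{k-1})^{-1}c_k$ separates a convergent canonical (genus‑$2$) product in the eigenvalues of $A_n$, a low‑degree polynomial factor $\exp(-z\operatorname{tr}A_n^{-1}-\tfrac12 z^2\operatorname{tr}A_n^{-2})$ whose coefficients converge in law, and the divergent scalar prefactor $|\det A_n|$; the role of $A_n$ is to strip off precisely the $O(\sqrt{\log n})$ fluctuation of $\log|\det A_n|$ while leaving an $O(1)$ residual. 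With such an $A_n$, tightness of $M_n$ in the space of entire functions follows from a uniform bound $\sup_n\mathbb E\sup_{|z|\le R}|M_n(z)|^2<\infty$ (propagated through the recursion using Kostlan's description of the eigenvalue moduli to tame the finitely many $O(1)$ eigenvalues, and the determinantal negative correlation to tame the bulk) together with Montel's theorem. Passing to the limit in the recursion then identifies any subsequential limit and gives part (1): $Q_n/A_n\Rightarrow Q$ uniformly on compacts.

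For the conditional Gaussianity, let $\mu_n(z):=z\,Q_{n-1}(z)/A_n$ and $K_n(z,w):=\widetilde\kappa_n(z,w)/A_n^{2}$ be the conditional mean and covariance of $M_n$ given $\mathcal F_{n-1}$, so that exactly $\mathbb E\bigl[e^{i\operatorname{Re}\langle M_n,\varphi\rangle}\mid\mathcal F_{n-1}\bigr]=\exp\bigl(i\operatorname{Re}\langle\mu_n,\varphi\rangle-\tfrac14\langle K_n\varphi,\varphi\rangle\bigr)$. Multiplying by an arbitrary bounded continuous functional of $(\mu_n,K_n)$, taking expectations, and passing to a joint distributional limit $(M_n,\mu_n,K_n)\Rightarrow(Q,\mu,K)$ shows that $Q$, conditionally on $\sigma(\mu,K)$, is Gaussian with mean $\mu$ and covariance $K$; since $A_n$ grows super‑geometrically the mean $\mu_n=zM_{n-1}A_{n-1}/A_n$ collapses to $0$, so the limit is centred, and $K=\lim K_n$ inherits positive‑definiteness and the analytic/anti‑analytic structure from the $K_n$, yielding the kernel $K$ of the theorem. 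I expect the hard part to be precisely this step: identifying the correct limiting random kernel $K$ — equivalently the ``de Finetti'' $\sigma$‑algebra $\sigma(K)$, which is strictly coarser than the natural filtration $\bigvee_n\mathcal F_n$ (the latter saturating to the whole $\sigma$‑algebra) — establishing the joint convergence $(M_n,\mu_n,K_n)\Rightarrow(Q,\mu,K)$, which requires quantitative control of $K_n$ near the (random) zeros of $Q$, and verifying that the spurious poles of the $f_k$ together with the non‑Gaussian quadratic‑chaos parts of the recursion are absorbed into $A_n$ and $K$ rather than contributing extra non‑Gaussian fluctuations to $Q$.
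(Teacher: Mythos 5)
Your first step (the Schur-complement recursion $Q_n=zQ_{n-1}-B_n$ with $B_n$ conditionally Gaussian given $\sigma(A_{n-1},c_n)$) is correct as far as it goes, but it is not the decomposition the paper uses, and the difference matters. The paper first conjugates the matrix to Hessenberg form by a randomized Householder/Lanczos reduction; after that, the rescaled characteristic polynomials $X_{k+1}=Q_k/(b_1\cdots b_k)$ satisfy a recursion in which the conditional covariance of $X_{k+1}$ given the past is exactly $\frac{k}{b_k^2}M_k$ with $M_k=\frac1k\sum_{j\le k}X_j(\lambda)\overline{X_j(\mu)}$, a running \emph{average} of rank-one kernels built from the previous polynomials. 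That averaging structure is what makes this a P\'olya urn: a Hilbert-space martingale argument (the paper's ``P\'olya's urn in Hilbert space'' theorem, applied to kernels on $D^2$ with a suitable $L^2$ norm) shows $M_k$ converges a.s.\ and in $L^2$, which simultaneously produces the limit kernel $K=M$, the normalizers $A_n=b_1\cdots b_n$ (explicit products of $\chi$ variables), and the convergence of the conditional law of $X_{k+1}$ to $\mbox{Normal}[0,M,0]$. Your kernel $\widetilde\kappa_n$ has no such averaging structure --- it is a single term involving the resolvent $(zI_{n-1}-A_{n-1})^{-1}$ applied to a fresh Gaussian column --- so nothing forces $\widetilde\kappa_n/A_n^2$ to converge, and you supply no mechanism for it.

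The genuine gaps are in your second and third steps. The construction of $A_n$ via genus-$2$ canonical products, Kostlan's radial decomposition and determinantal negative correlation is the outline of a programme, not an argument; in particular the bound $\sup_n\ev\sup_{|z|\le R}|M_n(z)|^2<\infty$ that you propose to ``propagate through the recursion'' is precisely the hard $L^2$-boundedness that the paper extracts from the urn theorem, and your route requires controlling $(zI_{n-1}-A_{n-1})^{-1}$ for $z$ inside the spectrum of a highly non-normal matrix, where the resolvent is governed by the pseudospectrum and can be enormous; the cancellation against $Q_{n-1}(z)$ converts this into controlling all $n^2$ cofactors of $zI-A_{n-1}$, rather than the $n$ leading principal minors that appear after the Hessenberg reduction. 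Likewise the joint convergence $(M_n,\mu_n,K_n)\Rightarrow(Q,\mu,K)$ and the identification of the limiting random kernel are asserted rather than proved, and you yourself flag them as the hard part. In short, the proposal correctly identifies the conditionally-Gaussian, urn-like flavour of the problem, but it is missing the key idea --- the unitary reduction to Hessenberg form --- which makes the conditional covariance an average of past data, makes the normalizer explicit, and turns every one of your ``hard parts'' into a soft consequence of a single martingale convergence theorem.
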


Thus, the limit is in fact a randomized Gaussian analytic 
function. Theorem~\ref{thm:convergenceofQn} thus gives a novel link between the
world of random matrices and Gaussian analytic functions.
In physics, certain connections between determinantal and Gaussian
structures are referred to as super-symmetry; Theorem~\ref{thm:convergenceofQn}
is a very specific, mathematically precise instance of a connection between these two structures.

A Gaussian analytic function (GAF) $f$ is a complex Gaussian field on $\CC$ 
which is almost surely analytic. By the theorem of \cite{sodin} referred to earlier,
the distribution of the zeros of a Gaussian analytic
function is determined by their intensity measure
\begin{equation}\label{ek}
d\mu(z)=\frac1\pi \Delta \log
K(z,z) d\mathcal L(z),
\end{equation}
where  $K(z,w)=\ev[f(z)\overline{f(w)}]$  is the covariance kernel and $\mathcal L$ is Lebesgue measure. Formula \eqref{ek}
is a special case of the well-known Kac-Rice formulas (see sec. 2.4 of \cite{hkpvbook}). In our setting this is just  an averaged 
version of Green's formula in complex analysis.  

A direct consequence is a connection between the infinite
Ginibre points and Gaussian analytic zero points, depicted
in the front-page figure.
\begin{corollary}
The infinite Ginibre point process has the same distribution as
Gaussian analytic zero points having a randomly chosen
intensity measure $\mu$.
\end{corollary}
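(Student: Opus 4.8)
The plan is to read the corollary off Theorem~\ref{thm:convergenceofQn} via Hurwitz's theorem together with Sodin's rigidity result~\eqref{ek}. The starting observation is that the zero set of $Q_n$ is precisely the eigenvalue point process of $A_n$, and since $A_n>0$ is a positive scalar the zeros of $Q_n/A_n$ coincide with those of $Q_n$. On the other hand, viewed in the local coordinate near the origin, the eigenvalue process of $A_n$ converges as $n\to\infty$ to the infinite Ginibre point process --- this is the classical bulk limit of the finite Ginibre ensembles (see~\cite{hkpvbook}). So it suffices to show that the zero set of $Q_n/A_n$ converges in distribution, as a point process, to the zero set of the limiting analytic function $Q$.

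To get this, I would transfer the convergence $Q_n/A_n\Rightarrow Q$ in the topology of local uniform convergence (Theorem~\ref{thm:convergenceofQn}) to the level of zero sets. The functional sending an entire function to its zero set, restricted to a compact disk, is continuous at every entire function that is not identically zero and has no zero on the bounding circle; this is exactly the content of Hurwitz's theorem, which says that if $f_n\to f$ locally uniformly and $f\not\equiv 0$ then the zeros of $f_n$ converge to those of $f$ with multiplicity. In our situation $Q$ is, conditionally on $K$, a Gaussian analytic function with positive definite --- hence non-degenerate --- covariance kernel $K$, so almost surely $Q\not\equiv 0$, its zero set is locally finite with simple zeros, and the event that $Q$ has a zero on a prescribed circle has probability zero. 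Hence the zero-counting functionals over a countable family of disks are almost sure continuity points of the law of $Q$, and (passing to Skorokhod representatives if convenient) the zero set of $Q_n/A_n$ converges in distribution to the zero set of $Q$. Combined with the previous paragraph, the infinite Ginibre point process has the same law as the zero set of $Q$.

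It remains to describe the zero set of $Q$. Conditionally on $K$, the function $Q$ is a GAF with covariance kernel $K$, so by Sodin's theorem its zero set is a point process whose conditional law depends on $K$ only through the intensity measure $d\mu(z)=\frac1\pi\Delta\log K(z,z)\,d\mathcal L(z)$ of~\eqref{ek}. Therefore the unconditional zero set of $Q$ is obtained by first sampling the random kernel $K$ --- equivalently, the random intensity measure $\mu$ --- and then sampling the Gaussian analytic zero process with that intensity; together with the identification above, this proves the corollary. The one step that needs genuine care is the transfer of distributional local-uniform convergence to the zero sets: it relies on the non-degeneracy of the random kernel $K$ from Theorem~\ref{thm:convergenceofQn} (so that $Q$ is almost surely not identically zero and Hurwitz's theorem applies) and on checking that the relevant zero-counting functionals are almost surely continuity points of the limiting law --- routine in this determinantal/GAF setting but worth stating explicitly.
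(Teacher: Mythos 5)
Your argument is correct and is essentially the route the paper intends: the paper states the corollary as a direct consequence of Theorem~\ref{thm:convergenceofQn} together with Sodin's theorem and formula~\eqref{ek}, and your write-up simply makes explicit the standard intermediate step (Hurwitz's theorem plus the a.s.\ non-degeneracy of $Q$) that transfers distributional local-uniform convergence of $Q_n/A_n$ to convergence of the zero point processes. No substantive difference from the paper's approach.
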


Note that this does not make the infinite Ginibre point process
the zero set of a true Gaussian analytic function, only one with a
randomized covariance kernel. This randomization does not change
the qualitative local behavior, but changes the global one, which
explains the phenomenon discussed above.

We conclude our paper by computing the first moment (everywhere) and the second moment 
(at zero) of the limiting covariance kernel normalized so that these moments exist.  

\bigskip
{\bf Real Ginibre matrices.} With our methods, we also prove an analogous theorem for characteristic polynomials of real Ginibre matrices (having i.i.d. real standard normal entries). However there are a few modifications as the characteristic polynomials are complex-valued but the values are not complex Gaussian (as complex Gaussians are isotropic, by definition, for us).

\begin{theorem}\label{thm:convergenceofQnforrealginibre} Let $A_{n}$ be the real Ginibre matrix with i.i.d.~$N(0,1)$ entries.
There exist positive random variables $A_n$ so that  the 
normalized characteristic polynomial $ Q_n(z)/A_n$ converges in
distribution to a random analytic function $Q(z)$.

Moreover, there exist functions $K(z, w),\hat{K}(z,w)$ so that given $K,\hat{K}$,  the function $Q(\cdot)$ is a Gaussian field with $\ev[Q(z)Q^{*}(w)]=K(z,w)$ and $\ev[Q(z)Q(w)]=\hat{K}(z,w)$.  Further, $K$ is analytic in $z$ and anti-analytic in $w$ while $\hat{K}$ is analytic in both $z$ and $w$. Therefore, $Q$ is a random analytic function whose real and imaginary parts are jointly Gaussian fields.
\end{theorem}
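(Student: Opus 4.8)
The plan is to run the same P\'olya-urn-type recursion in $n$ that underlies Theorem~\ref{thm:convergenceofQn}, but now tracking the real structure as well. Concretely, I would first recall how the characteristic polynomial $Q_n(z)=\det(z-A_n)$ is built from $Q_{n-1}$: expanding $A_n$ one row/column at a time and using the independence of the new Gaussian entries, one obtains a recursion of the form $Q_n(z) = (z-a_{nn})Q_{n-1}(z) - (\text{bilinear form in the new row, new column, and } (z-A_{n-1})^{-1})$, and then one rewrites this, after normalizing by the running random scalar $A_n$, as an autoregressive update with Gaussian innovations whose conditional covariance is a measurable function of the past. In the complex case the innovations are isotropic complex Gaussians, which forces $\ev[Q(z)Q(w)]\equiv 0$ in the limit; in the real case the entries are real $N(0,1)$, so the innovation vectors are real Gaussians and the pseudo-covariance $\ev[Q_n(z)Q_n(w)]$ does \emph{not} vanish. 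The first step is therefore to carry out this recursion keeping \emph{both} the Hermitian bracket $\ev[\,\cdot\,\overline{\,\cdot\,}\mid \mathcal F_{n-1}]$ and the symmetric bracket $\ev[\,\cdot\,\,\cdot\,\mid \mathcal F_{n-1}]$, and to show that after the normalization the pair $(Q_n(z)/A_n,\ \overline{Q_n(z)}/A_n)$ — equivalently the pair of real and imaginary parts — forms a (conditionally) Gaussian process in $z$ with a predictable covariance/pseudocovariance structure.

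The second step is the convergence itself. I would show the normalized sequence $Q_n/A_n$ is tight in the topology of uniform convergence on compacts, using Montel's theorem: it suffices to bound $\ev\log^{+}\sup_{|z|\le R}|Q_n(z)/A_n|$ or, more simply, to control the coefficients of the polynomial $Q_n/A_n$ (say via the representation of $|Q_n(0)|$ as a product of independent $\mathrm{Gamma}(i,1)$'s for the normalizing constant, and via martingale/$L^2$ estimates for the higher coefficients). This is essentially the same tightness argument as in the complex case, since the real and complex innovations have the same second moments up to the extra pseudo-covariance term, and the pseudo-covariance is controlled by the same quantities. Along the way one identifies the limiting predictable objects: the limit $K(z,w)$ of the Hermitian kernels and the limit $\hat K(z,w)$ of the symmetric kernels, both as almost-sure limits of the martingale-like partial sums produced by the recursion.

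The third step is to read off the analyticity properties. Since each $Q_n(z)/A_n$ is a polynomial (hence analytic), the locally-uniform limit $Q$ is analytic almost surely; that $Q$ is polynomial-coefficientwise Gaussian given the predictable data passes to the limit because conditional-Gaussianity is preserved under $L^2$ limits with convergent conditional covariances. The kernel $K(z,w)=\ev[Q(z)\overline{Q(w)}\mid K,\hat K]$ inherits analyticity in $z$ and anti-analyticity in $w$ exactly as in Theorem~\ref{thm:convergenceofQn}, from the corresponding property of each finite-$n$ kernel (each is a polynomial in $z$ and in $\bar w$). The new point is that $\hat K(z,w)=\ev[Q(z)Q(w)\mid K,\hat K]$ is analytic in \emph{both} variables: at finite $n$, $\ev[Q_n(z)Q_n(w)\mid\mathcal F_{n-1}]$ is a polynomial in $(z,w)$ because the real innovation vectors contract against $(z-A_{n-1})^{-1}$ and $(w-A_{n-1})^{-1}$ holomorphically in each slot (no conjugation appears), and this passes to the limit. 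Finally, $(\Re Q,\Im Q)$ being jointly Gaussian given the data is a restatement of the fact that a complex-valued field with prescribed conditional covariance \emph{and} pseudocovariance has, conditionally, a jointly Gaussian real/imaginary decomposition.

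The main obstacle I anticipate is \textbf{Step 2's tightness together with the identification of the limiting pseudo-kernel $\hat K$}: one must show not only that the innovations' symmetric second moments converge (as partial sums of a predictable sequence) but that they converge to something nontrivial and jointly consistent with the Hermitian limit $K$ — i.e. that the limiting $2\times2$ conditional covariance matrix of $(\Re Q,\Im Q)$ is genuinely positive semidefinite almost surely and that no mass escapes in the limit. Handling the pseudo-covariance requires care precisely because it is the feature absent in the isotropic complex case, so the clean cancellations available there are not available; I would expect to need a slightly more delicate second-moment computation for the real row/column vectors, plus a uniform integrability argument to upgrade $L^2$ convergence of coefficients to the claimed distributional convergence of analytic functions.
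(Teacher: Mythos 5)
There is a genuine gap, and it is in your very first step. The recursion you propose — expanding $\det(zI-A_n)$ by the new row and column to get $Q_n(z)=(z-a_{nn})Q_{n-1}(z)-c^{T}\mathrm{adj}(zI-A_{n-1})\,b$ — does \emph{not} have the conditionally Gaussian structure you assert. Given $\sigma(A_{n-1})$, the term $c^{T}\mathrm{adj}(zI-A_{n-1})\,b$ is a bilinear form in two independent fresh Gaussian vectors, i.e.\ a second-order Gaussian chaos, not a Gaussian variable; so there is no ``autoregressive update with Gaussian innovations whose conditional covariance is a measurable function of the past.'' (If you instead enlarge the filtration to include the new column $b$, the innovation becomes conditionally Gaussian, but its conditional covariance is an object like $\|\mathrm{adj}(zI-A_{n-1})b\|^{2}$, which is not an average of past quantities, and the urn structure — the whole engine of the proof — is lost.) The paper circumvents this by first conjugating $A_n$ to Hessenberg form via the randomized Lanczos/Householder reduction of Section~\ref{sec:recursionofcharpoly}. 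After that reduction the $k$-th row of the eigenvalue equation reads $\overrightarrow{X_k}\cdot\g_k+b_kX_{k+1}=\lambda X_k$, where $\g_k$ is a fresh Gaussian row independent of the past and $b_k$ is a $\chi$-variable; hence $X_{k+1}$ given the past is \emph{exactly} Gaussian with conditional covariance $b_k^{-2}\sum_{j\le k}X_j(\lambda)\overline{X_j(\mu)}$ and, for $\beta=1$, pseudo-covariance $b_k^{-2}\sum_{j\le k}X_j(\lambda)X_j(\mu)$ — averages of the past, which is precisely the P\'olya urn. Your proposal never performs this reduction, and without it the scheme does not get off the ground.

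The second gap is that you defer the actual convergence to ``martingale-like partial sums converge,'' which is where all the work lies. The paper's mechanism is Theorem~\ref{thm Hilbert Polya}: the kernels $M_k=\frac1k\sum_{j\le k}X_j\otimes X_j^{*}$ and $\hat M_k=\frac1k\sum_{j\le k}X_j\otimes X_j$, viewed as elements of the Hilbert space \eqref{e:kernelnorm} (for $\beta=1$, as a pair in $\mathcal H\times\mathcal H$), satisfy $\ev[R_{k+1}\mid\mathcal F_k]\approx M_k$ with summable errors plus a second-moment bound from Lemma~\ref{l:knorm}; a Doob decomposition and an a priori bound on $\ev\|M_k\|^{2}$ then give a.s.\ and $L^2$ convergence of $(M_k,\hat M_k)$ to $(K,\hat K)$. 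This replaces your ``tightness via Montel plus identification of the limit'' plan, and it is also where the worries you flag resolve themselves for free: each $M_k$ is an average of rank-one positive kernels, so positive semidefiniteness of $K$ is automatic, and $R_k/k\to0$ gives $X_k/b_k\to0$, so the conditional law of $X_{k+1}$ converges to $\mbox{Normal}[0,K,\hat K]$ with no uniform-integrability upgrade needed. Once this is in place, the real case requires no new ideas beyond running the identical argument for the pair $(R_k,\hat R_k)$; your instinct that the pseudo-covariance is the only new feature is correct, but the ``delicate second-moment computation'' you anticipate is just the same Lemma~\ref{l:knorm} estimate applied to $\hat R_k$. Also note that the normalizers $A_n$ must be random (essentially $b_1\cdots b_{n-1}$); the Gamma-product representation of $|Q_n(0)|$ shows deterministic normalization is impossible, so it cannot serve the role you assign it.
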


For later purposes, we introduce the following notation.

\noindent{\bf Notation.} We write $Y\sim \mbox{Normal}[m(\lambda),L(\lambda,\mu),\hat{L}(\lambda,\mu)]$ to mean that the real and imaginary parts of $Y$ are jointly Gaussian fields, $\ev[Y(\lambda)]=m(\lambda)$ and $\ev[Y(\lambda)\bar{Y}(\mu)]=L(\lambda,\mu)$ and $\ev[Y(\lambda)Y(\mu)]=\hat{L}(\lambda,\mu)$.

Observe that $Y$ is a random analytic function if and only if $m(\lambda)$ is analytic in $\lambda$, $L(\lambda,\mu)$ is analytic in $\lambda$ and anti-analytic in $\mu$, and $\hat{L}$ is analytic in both $\lambda$ and $\mu$. Further,  $Y$ is a complex Gaussian field if and only if $\hat{L}=0$. In particular, for a GAF the third argument is identically zero. 

\bigskip 

{\bf A P\'olya's urn scheme for characteristic polynomials.} We
construct the random covariance kernel is via a version of
P\'olya's urn scheme. By similarity transformations, we first convert the matrix into a 
Hessenberg (lower triangular plus an extra off-diagonal) form.
Then, we consider characteristic polynomials of the successive
minors. It turns out that these polynomials develop in a version
of P\'olya's urn scheme, which we  recall here briefly. An urn 
containing a black and a white ball is given, and at each time an 
extra ball is added to the urn whose color is black or white with
probabilities proportional to number of the balls of the same
color already in the urn. In short, we may write

$$
X_1=0,\qquad X_2=1, \qquad X_{k+1}=\mbox{Bernoulli}
\left[\frac{X_1+\ldots +X_k}{k}\right], \quad k\ge 3
$$
For us, the essential part of P\'olya's urn is that the
 $X_{k+1}$ given the events up to time $k$ is a random variable
 whose mean is close to the average of the previous $X_k$. The
 conclusion is that this average converges almost surely to a
 random limit.

It turns out that a similar recursion this also holds for the
random constant multiples $X_n=X_n(\lambda)$ of the characteristic
polynomials $Q_n$. We shall see that 

$$ 
X_1 = 1,\qquad  X_{k+1}\given_{X_{1},\ldots,X_{k}} = \mbox{Normal}\left[m_{k}(\lambda),M_{k}(\lambda,\mu),\hat{M}_{k}(\lambda,\mu)\right]
$$
where 
\begin{align*}
m_{k}(\lambda)&=\frac{\lambda X_k(\lambda)}{b_k}, \qquad M_{k}(\lambda,\mu)=\frac{X_1(\lambda)X_1^*(\mu)+\ldots +X_k(\lambda)X_k^*(\mu)}{b_k^2} \\
 \hat{M}_{k}(\lambda,\mu)&=\begin{cases}
 b_{k}^{-2}\left(X_1(\lambda)X_1(\mu)+\ldots +X_k(\lambda)X_k(\mu)\right) & \mbox{ if }\beta=1.\\
 0 &\mbox{ if }\beta=2.
 \end{cases}
\end{align*}
with $b_k^2/k \rightarrow 1$.

The main feature of P\'olya's
urn is that the parameter of the Bernoulli distribution
converges to a random limit almost surely, and the samples
are asymptotically independent from this random
distribution. Similarly, in our case, the variance
parameter converges almost surely, and given the limit, the
samples are asymptotically independent. In particular, as 
we shall show, in the limit they behave like a Gaussian 
analytic function with a random covariance kernel.

\section{The recursion for the characteristic polynomial}\label{sec:recursionofcharpoly}

Start with the real ($\beta=1$) or complex ($\beta=2$)  Ginibre matrix, having i.i.d real or complex Gaussian entries.  For the purposes of this paper, a standard complex Gaussian random variable will mean $X+iY$ where $X,Y$ are i.i.d. $N(0,1/2)$. 

Following the randomized application of the Lanczos algorithm as
pioneered by \cite{trotter}, we conjugate the matrix $A$ by an
orthogonal/unitary block matrix as follows. Let
$$A=\left(%
\begin{array}{cccc}
  a_{11} & \ldots  & b & \ldots \\
  \vdots & &  &  \\
  c & & A_{11} &  \\
  \vdots &  & & \\
\end{array}\right),\qquad
O=\left(%
\begin{array}{cccc} 1 & \ldots  & 0 & \ldots \\
  \vdots & &  &  \\
  0 & & O_{11} &  \\
  \vdots &  & & \\
\end{array}\right)
$$
so that we get
$$OAO^*=\left(%
\begin{array}{cccc} a_{11} & \ldots  & b O_{11}^*  & \ldots \\
  \vdots & & &  \\
  O_{11} c & & O_{11}A_{11}O_{11}^* &  \\
  \vdots &  & & \\
\end{array}\right).
$$
If $O_{11}$ is chosen depending only on $b$ so that it
rotates $b$ into the first coordinate vector, then we get a matrix
of the form
$$
OAO^*=\left(%
\begin{array}{ccccc} \mathcal N  &   \frac{1}{\sqrt{\beta}}\chi_{(n-1)\beta}& 0 &\cdots &0\\
  \mathcal N  & \mathcal N &    \cdots &&\mathcal N\\
  \vdots  & &  &  &\vdots\\
  \mathcal N  & \mathcal N  &  \cdots && \mathcal N
\end{array}\right)
$$
where all the entries are independent, and $\mathcal N$ indicates that the entry has Normal (real or complex) distribution and  $\chi_n$ indicates that the entry has  the distribution of the
length of the vector with independent standard real
Gaussian entries in $n$ dimensions. This is because the
normal vector $c$ has a distribution
that is invariant under rotation and the matrix $A_{11}$ has a distribution
that is invariant under conjugation by a 
rotation. Repeated application of this
procedure (where the rotation matrices are block diagonal
with an identity matrix of increasing dimension at the top)
brings $M$ to a form
$$
\left(%
\begin{array}{ccccc}
  \g     &   \frac{1}{\sqrt{\beta}}\chi_{(n-1)\beta} &  &  & 0 \\
  \g      & \g  &  \frac{1}{\sqrt{\beta}}\chi_{(n-2)\beta}&  &  \\
  \vdots &   & & \ddots  &  \\
         &   &  &  & \frac{1}{\sqrt{\beta}}\chi_\beta \\
  \g     &   & \ldots &  & \g \\
\end{array}%
\right)
$$
We conjugate this matrix by the reverse permutation matrix
and transpose it. The eigenvalue equation of the resulting matrix reads
\begin{equation}\label{eq:hessenbergformofginibre}
\left(%
\begin{array}{ccccc}
  \g     &   \frac{1}{\sqrt{\beta}}\chi_{\beta} &  &  & 0 \\
  \g      & \g  &  \frac{1}{\sqrt{\beta}}\chi_{2\beta}&  &  \\
  \vdots &   &  & \ddots &  \\
         &   &  &  & \frac{1}{\sqrt{\beta}}\chi_{(n-1)\beta} \\
  \g     &   & \ldots &  & \g \\
\end{array}%
\right)X=\lambda X.
\end{equation}
\begin{remark} This reduction is analogous to the tridiagonal form of the GUE matrix obtained by \cite{trotter} and that has been of much use in studying the scaling limits of eigenvalues in recent years (for example, see \cite{edelman}, \cite{ramirezridervirag} and \cite{valkovirag}). 

\end{remark}

The matrix on the left of \eqref{eq:hessenbergformofginibre} is the one whose eigenvalues we will study. Indeed, let
$\g_k$ denote the $k$th row of this matrix, with the
$\chi$ variable removed. Then the $k$th row of the
eigenvalue equation is
\begin{equation}\label{eq:evalueequation}
\overrightarrow{X_k} \cdot  \g_k+\frac{1}{\sqrt{\beta}}\chi_{\beta k}X_{k+1}  =  \lambda X_k
\end{equation}
where $\overrightarrow{X_k}= (X_1,\ldots,
X_{k})$. For any $\lambda\in \mathbb C$, let
$X_1(\lambda)=1$, and define $X_{k+1}(\lambda)$ recursively as
the solution to the equation given above for $k< n$. Then, $\lambda$ is an eigenvalue of the matrix if and only if it satisfies the last equation $\overrightarrow{X_n}(\lambda) \cdot  \g_n=\lambda X_{n}(\lambda)$ or equivalently, if we solve for $k=n$ also and get $X_{n+1}(\lambda)=0$. 


But these equations are consistent as $n$ changes, so we may define the
$X_k(\lambda)$ for all $k\ge 0$ through the infinite version of the above matrix. Clearly $X_k$ is a polynomial of degree $k-1$ and hence,   $X_{k+1}(\lambda)$ is a random constant times the characteristic polynomial of the top $k\times k$ submatrix for every $k$.

We introduce the random functions
\begin{align*}
M_k(\lambda,\mu) &= \overrightarrow{X_k}(\lambda)\cdot
\overrightarrow{X_k^*}(\mu)=\sum_{j=1}^{k}X_{j}(\lambda)X_{j}^{*}(\mu), \\
\hat{M}_k(\lambda,\mu) &= \begin{cases}
\overrightarrow{X_k}(\lambda)\cdot
\overrightarrow{X_k}(\mu)=\sum_{j=1}^{k}X_{j}(\lambda)X_{j}(\mu) & \mbox{ if }\beta=1, \\
0 & \mbox{ if } \beta=2. 
\end{cases}
\end{align*}
where we use both $z^*$ and $\bar{z}$ to denote the complex conjugate of $z$. 

Let $\mathcal F_k$ denote the $\sigma$-field generated by
the first $k-1$ rows of the matrix together with
$\chi_{k\beta}$. From \eqref{eq:evalueequation} note that  $X_{k+1}$ given $\mathcal F_k$
is a Gaussian field with mean $\lambda \sqrt{\beta}X_k/\chi_{k\beta}$ covariance structure given by 
$\beta M_k/\chi_{k\beta}^2$ and $\beta \hat{M}_k/\chi_{k\beta}^2$.
Moreover, we have
$$
M_{k+1}-M_k = X_{k+1}(\lambda)\overline{
X_{k+1}(\mu)}, \;\; \hat{M}_{k+1}-\hat{M}_k = \begin{cases}X_{k+1}(\lambda)
X_{k+1}(\mu) & \mbox{ if }\beta=1, \\
0 & \mbox{ if } \beta=2. 
\end{cases}.
$$
Thus the evolution of $X_k$ can be summarized as a
randomized recursion with $X_{1}=1$ and 
$$
  X_{k+1}\given_{\mathcal F_{k}} = \begin{cases}
\mbox{Normal}\left[\frac{\lambda
X_k}{\frac{1}{\sqrt{\beta}}\chi_{k\beta}}, \frac{X_1X_1^*+\ldots
+X_kX_k^*}{\frac{1}{\beta}\chi_{k\beta}^2}\right]  & \mbox{ if } \beta=2, \\
\mbox{Normal}\left[\frac{\lambda
X_k}{\frac{1}{\sqrt{\beta}}\chi_{k\beta}}, \frac{X_1(\lambda)X_1^*(\mu)+\ldots
+X_k(\lambda)X_k^*(\mu)}{\frac{1}{\beta}\chi_{k\beta}^2},\frac{X_1(\lambda)X_1(\mu)+\ldots
+X_k(\lambda)X_k(\mu)}{\frac{1}{\beta}\chi_{k\beta}^2}\right]  & \mbox{ if } \beta=1.
\end{cases}
$$ 
for $k\ge 1$.

This is the recursion, analogous to P\'olya's urn discussed
in the introduction. Next, we establish a framework for the
asymptotic analysis of such recursions.

\section{P\'olya's urn in Hilbert space -- theorem and examples}

The main tool for the analysis of P\'olya's urn schemes
will be the following theorem.

\begin{theorem}[P\'olya's urn in Hilbert space]\label{thm Hilbert
Polya} Let $\mathcal H$ be a Hilbert space, and let $R_1,R_2,\ldots R_{k_0-1}$ be
deterministic elements of $\mathcal H$. Let $\mathcal F_k$ be a filtration in some probability space and for each $k\ge k_0$ assume that the $\mathcal H$-valued  random variables $R_k, M_k\in \mathcal F_k$ satisfy
\begin{eqnarray} \nonumber
M_k &=& \frac{R_1+\ldots+ R_k}{k}\\
\|\ev [R_{k+1}| \mathcal F_k] -M_k \| &=&
(1+\|M_k\|)O(\eps_k) +\|R_k\|O(1/k) \label{resample}
\\
 \ev [\|R_{k+1}\|^2| \mathcal F_k] &=&O(1+\|M_{k-1}\|^2+\|M_{k}\|^2)
 \label{second}
\end{eqnarray} 
for some positive sequence  $\eps_k$ such that $\eps_k
k^{-1}$ is summable.

Then $M_k$ converges to some limit a.s.\ and in $L^2$.
\end{theorem}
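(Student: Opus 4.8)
The plan is to show $M_k$ is an approximate martingale that is bounded in $L^2$, hence convergent. First I would establish an a priori bound: iterate the hypotheses \eqref{resample} and \eqref{second} to show $\sup_k \ev\|M_k\|^2 < \infty$. Writing $M_{k+1} - M_k = \frac{1}{k+1}(R_{k+1} - M_k)$, one gets
\[
\ev[\|M_{k+1}\|^2 \mid \mathcal F_k] = \|M_k\|^2 + \frac{2}{k+1}\langle \ev[R_{k+1}\mid\mathcal F_k] - M_k, M_k\rangle + \frac{1}{(k+1)^2}\ev[\|R_{k+1}-M_k\|^2\mid\mathcal F_k].
\]
The cross term is controlled by \eqref{resample}: its absolute value is at most $\frac{2}{k+1}\|M_k\|\big((1+\|M_k\|)O(\eps_k) + \|R_k\|O(1/k)\big)$, and the last term by \eqref{second} is $O\big(\frac{1}{k^2}(1+\|M_{k-1}\|^2+\|M_k\|^2)\big)$. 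Setting $u_k = \ev\|M_k\|^2$ and $v_k = \ev\|R_k\|^2$, and using $\ev[\|M_k\|\,\|R_k\|] \le \tfrac12(u_k + v_k)$ together with $v_{k+1} = O(1 + u_{k-1} + u_k)$ from \eqref{second}, I obtain a recursive inequality of the form $u_{k+1} \le u_k(1 + C\eps_k/k + C/k^2) + C(\eps_k + 1)/k \cdot(\text{bounded in }u)$; more carefully tracking constants, $u_{k+1} \le (1 + c_k) u_k + c_k$ with $\sum c_k < \infty$ since $\eps_k/k$ is summable. Gronwall/telescoping then gives $\sup_k u_k < \infty$.

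With the uniform $L^2$ bound in hand, the second step is to upgrade the increments to a genuine convergence statement. Decompose $M_{k+1} - M_k = D_k + E_k$ where $D_k = \frac{1}{k+1}(R_{k+1} - \ev[R_{k+1}\mid\mathcal F_k])$ is a martingale-difference term and $E_k = \frac{1}{k+1}(\ev[R_{k+1}\mid\mathcal F_k] - M_k)$ is the ``drift.'' For the martingale part, $\sum_k \ev\|D_k\|^2 \le \sum_k \frac{1}{(k+1)^2}\ev\|R_{k+1}\|^2 = \sum_k O\big(\frac{1}{k^2}(1+u_{k-1}+u_k)\big) < \infty$ by the a priori bound, so $\sum_k D_k$ converges a.s.\ and in $L^2$ by the Hilbert-space martingale convergence theorem (the partial sums form an $L^2$-bounded martingale). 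For the drift part, \eqref{resample} gives $\ev\|E_k\| \le \frac{1}{k+1}\big((1+\ev\|M_k\|)O(\eps_k) + \ev\|R_k\|\,O(1/k)\big) = O(\eps_k/k) + O(1/k^2)$, which is summable, so $\sum_k E_k$ converges absolutely in $L^1$, and after passing to a subsequence (or by a standard Borel--Cantelli argument on $\sum \ev\|E_k\|$) converges a.s.\ as well. Adding the two, $M_k = M_{k_0} + \sum_{j=k_0}^{k-1}(D_j + E_j)$ converges a.s.; the $L^2$ convergence follows from the uniform integrability provided by $\sup_k \ev\|M_k\|^2 < \infty$ (or directly from $L^2$-convergence of $\sum D_j$ plus $L^1$, hence along a subsequence $L^2$ after truncation, of $\sum E_j$ — cleanest is to note $\{M_k\}$ is Cauchy in $L^2$ since $\ev\|M_k - M_\ell\|^2 \to 0$ from the same increment estimates).

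The main obstacle is the first step: closing the a priori $L^2$ bound is circular-looking because \eqref{second} bounds $\ev\|R_{k+1}\|^2$ in terms of $\|M_{k-1}\|^2$ and $\|M_k\|^2$, which are exactly what we are trying to bound, and the cross term in the expansion of $\ev\|M_{k+1}\|^2$ also feeds $\|M_k\|^2$ back in with a coefficient $O(\eps_k/k)$. The resolution is that this feedback coefficient is summable, so the recursion $u_{k+1} \le (1+c_k)u_k + c_k$ with $\sum c_k<\infty$ does not blow up — but one must be careful that the ``$\|R_k\|O(1/k)$'' term in \eqref{resample}, when multiplied by $\|M_k\|$ and bounded via $v_k$, really does contribute only an $O(1/k^2)$-type (hence summable) coefficient to $u_k$ after applying \eqref{second} at index $k$. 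Everything else — the martingale convergence, the summability of the drift — is routine once the uniform bound is secured.
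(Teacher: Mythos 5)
Your proposal is correct and follows essentially the same route as the paper: the identical Doob-type decomposition of $M_{k+1}-M_k$ into a martingale increment and a predictable drift, the same Gronwall-type recursion $u_{k+1}\le(1+c_k)u_k+c_k$ with $\sum c_k<\infty$ to secure the a priori bound $\sup_k\ev\|M_k\|^2<\infty$, and then $L^2$-bounded Hilbert-space martingale convergence for one part and summability of the drift norms for the other. The only cosmetic difference is that the paper controls the drift via $\sum(\ev\|B_k\|^2)^{1/2}<\infty$ rather than $\sum\ev\|E_k\|<\infty$, which is exactly the ``cleanest'' variant you note at the end.
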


The right hand sides of \eqref{resample} and \eqref{second}
should simply read as  ``small''. The specific error terms
here are taylored to the application at hand.

Note that without the error term, \eqref{resample} says that
the mean of $R_{k+1}$ given $\mathcal F_k$ is equal to
$M_k=(R_1+\ldots + R_k)/k$. This is the setting of
P\'olya's urn scheme.

\subsection{Example: Classical P\'olya's urn}

Let $\mathcal H=\mathbb R$ and  $R_1=0$, $R_2=1$.  Given $R_1,\ldots, R_k$ let
$$
R_{k+1}\sim \mbox{Bernoulli}(M_k), \qquad \mbox{ with }
\qquad M_k=\frac{R_1+\ldots+ R_k}k.
$$ 
Then \eqref{resample} holds without error terms and
$R_{k+1}^2$ is bounded by 1. So $M_k$ converges to a limit $M$
almost surely.

It is well-known that $M$ has a Beta distribution, and the limiting distribution of $R_k$ is that of Bernoulli($M$) (i.e., sample $M$ and sample from Bernoulli $M$). In fact, for each $k$, the distribution of $R_k$ given $M$ is Bernoulli($M$) but in our setting this is a special phenomenon owing to the fact that error terms vanish.

\subsection{Example: a semi-random Hessenberg matrix}\label{ex:semirandom}

For the next example, let $D\subset \mathbb C$ be a closed disk. For our next example, we will consider the Hilbert space of $2$-variable functions from $D^2\to \mathbb C$, with
the following inner product
\begin{equation}\label{e:kernelnorm}
\langle f,g \rangle = \int_{D^2} f(x,y)\bar g(x,y)\,dx\,dy + \int_D f(x,x)\bar g(x,x)\,dx
\end{equation}
or, in other words, consider the usual $L^2$ inner product with respect to the sum of the Lebesgue measure on $D^2$ and Lebesgue measure on the diagonal of $D^2$. More precisely, we define this inner product on smooth functions and then take the completion to get a Hilbert space. We will denote the corresponding norm simply by $\|\cdot \|$. The following simple Lemma is needed. 

\begin{lemma}\label{l:knorm} Let $X\sim \mbox{Normal}[0,M,\hat{M}]$ and let $m: D\to \mathbb C$ be a function. Let $R=(X+m)\otimes (\bar X+\bar m)$ and $\hat{R}=(X+m)\otimes (X+m)$. Then
$$
\max\{\ev \|R\|^2 , \ev \|\hat{R}\|^2\}\le 8(\|m\|^{4}+\|m^2\|^2) + 24 (1+|D|)\left(\|M\|^2+\|\hat{M}\|^{2}\right).
$$
Here $|D|$ is the area of $D$ and the norms on $m,m^2$ are $L^2(D)$.
\end{lemma}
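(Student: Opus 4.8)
The plan is to bound $\ev\|R\|^2$ and $\ev\|\hat R\|^2$ directly from the definition of the norm~\eqref{e:kernelnorm}, splitting into the $D^2$-integral and the diagonal integral, and then reducing everything to fourth moments of the complex Gaussian field $X$. First I would write $R(x,y) = (X(x)+m(x))(\overline{X(y)}+\overline{m(y)})$, so that
\[
\|R\|^2 = \int_{D^2} |X(x)+m(x)|^2\,|X(y)+m(y)|^2\,dx\,dy + \int_D |X(x)+m(x)|^4\,dx,
\]
and similarly for $\hat R$ with $|X(y)+m(y)|^2$ replaced by $|X(y)+m(y)|^2$ in the off-diagonal term but with $(X(x)+m(x))^2(X(y)+m(y))^2$ and $(X(x)+m(x))^4$ — note that $|(X+m)(x)\,(X+m)(y)|$ is the same for $R$ and $\hat R$, so in fact $\|R\|^2$ and $\|\hat R\|^2$ have \emph{identical} integrands in absolute value, and it suffices to bound $\ev\|R\|^2$. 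Using $\int_{D^2}f(x)f(y)\,dx\,dy = \left(\int_D f\right)^2$ is not available since we need expectations inside; instead I would take expectations first and use Fubini, reducing to controlling $\ev\big[|X(x)+m(x)|^2\,|X(y)+m(y)|^2\big]$ and $\ev\big[|X(x)+m(x)|^4\big]$ pointwise.

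The key step is the pointwise Gaussian moment bound. Writing $X(x) = U$, a (possibly non-isotropic, since $\hat M \neq 0$ is allowed) centered complex Gaussian with $\ev|U|^2 = M(x,x)$ and $\ev U^2 = \hat M(x,x)$, I would use $|U+m(x)|^2 \le 2|U|^2 + 2|m(x)|^2$ and hence
\[
\ev|U+m(x)|^4 \le 8\,\ev|U|^4 + 8|m(x)|^4.
\]
For a centered complex Gaussian, $\ev|U|^4 \le 3(\ev|U|^2)^2 + \dots$; more carefully, writing $U = U_1 + iU_2$ one gets $\ev|U|^4 = 3(\ev|U|^2)^2 - 2|\ev U^2|^2 + \dots$, but all I need is a clean bound $\ev|U|^4 \le 3\big(M(x,x)^2 + |\hat M(x,x)|^2\big)$ or similar, which follows from Wick's formula. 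Then $\ev|U+m(x)|^4 \le 8|m(x)|^4 + 24\big(M(x,x)^2 + |\hat M(x,x)|^2\big)$. For the off-diagonal term, Cauchy--Schwarz in the probability space gives $\ev\big[|X(x)+m(x)|^2|X(y)+m(y)|^2\big] \le \big(\ev|X(x)+m(x)|^4\big)^{1/2}\big(\ev|X(y)+m(y)|^4\big)^{1/2}$, so integrating over $D^2$ and using $\left(\int_D g\right)^2 \le |D|\int_D g^2$ is again not quite what I want; rather, $\int_{D^2}a(x)a(y)\,dx\,dy = \left(\int_D a\right)^2$ with $a(x) = \big(\ev|X(x)+m(x)|^4\big)^{1/2}$, and then $\left(\int_D a\right)^2 \le |D|\int_D a^2 = |D|\int_D \ev|X(x)+m(x)|^4\,dx$.

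Assembling: $\ev\|R\|^2 \le (1+|D|)\int_D \ev|X(x)+m(x)|^4\,dx \le (1+|D|)\int_D \big(8|m(x)|^4 + 24\,M(x,x)^2 + 24|\hat M(x,x)|^2\big)\,dx$. Now $\int_D |m(x)|^4\,dx \le \|m^2\|^2$ (indeed they are equal, $\|m^2\|_{L^2(D)}^2 = \int_D|m(x)|^4dx$), and I must relate $\int_D M(x,x)^2\,dx$ to $\|M\|^2 = \int_{D^2}|M(x,y)|^2\,dx\,dy + \int_D|M(x,x)|^2\,dx$; since $M$ is a positive-definite kernel, $|M(x,x)M(y,y)| \ge |M(x,y)|^2$, but more simply $\int_D M(x,x)^2\,dx \le \int_D |M(x,x)|^2\,dx \le \|M\|^2$ directly from the diagonal part of the norm, and likewise $\int_D|\hat M(x,x)|^2dx \le \|\hat M\|^2$. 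This yields $\ev\|R\|^2 \le (1+|D|)\big(8\|m^2\|^2 + 24\|M\|^2 + 24\|\hat M\|^2\big)$. The stated bound also carries a $\|m\|^4$ term; to match it I would keep the crude split $|m(x)|^4$ under its own integral sign without invoking $\|m^2\|$, or simply note $8\|m^2\|^2 \le 8(\|m\|^4 + \|m^2\|^2)$, giving $\max\{\ev\|R\|^2,\ev\|\hat R\|^2\} \le 8(\|m\|^4 + \|m^2\|^2) + 24(1+|D|)(\|M\|^2 + \|\hat M\|^2)$ after absorbing the $(1+|D|)$ factor appropriately (it should multiply only the $M$, $\hat M$ terms, as in the statement, which is consistent since the $m$-contribution to $\int_D\ev|X+m|^4$ is $8\int_D|m|^4 \le 8\|m^2\|^2$ and needs only the diagonal integral, not the full $(1+|D|)$).

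The main obstacle is bookkeeping: getting the constants and the placement of the $(1+|D|)$ factor exactly as stated, and handling the non-isotropy (the $\hat M$ term) correctly in the pointwise Wick computation — one must remember that for a general centered complex Gaussian $\ev|U|^4$ is controlled by $M(x,x)^2$ \emph{and} $|\hat M(x,x)|^2$, not $M(x,x)^2$ alone. Everything else is Cauchy--Schwarz, $|a+b|^2 \le 2|a|^2+2|b|^2$, and Fubini.
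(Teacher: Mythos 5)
Your overall strategy --- reduce $\|R\|^2$ to the off-diagonal and diagonal integrals, observe $|R|=|\hat R|$ pointwise so one bound suffices, and control everything by fourth moments of the Gaussian field via Wick (keeping both $M(x,x)^2$ and $|\hat M(x,x)|^2$) --- is the same as the paper's, and your Gaussian moment bound $\ev|U|^4=2M(x,x)^2+|\hat M(x,x)|^2\le 3(M(x,x)^2+|\hat M(x,x)|^2)$ is if anything more careful than the paper's. The one genuine problem is your treatment of the off-diagonal term. By Cauchy--Schwarzing in the probability space and then using $\left(\int_D a\right)^2\le |D|\int_D a^2$, you bound the entire quantity by $(1+|D|)\int_D\ev|X+m|^4$, so the factor $(1+|D|)$ attaches to the $m$-contribution as well, giving $8(1+|D|)\|m^2\|^2$ in place of $8(\|m\|^4+\|m^2\|^2)$. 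This is strictly weaker: by Cauchy--Schwarz $\|m\|^4\le |D|\,\|m^2\|^2$, so $\|m\|^4+\|m^2\|^2\le(1+|D|)\|m^2\|^2$ and your bound cannot be massaged into the stated one. Your parenthetical fix (that the $m$-contribution ``needs only the diagonal integral'') is not correct within your own derivation, since your off-diagonal bound $|D|\int_D\ev|X+m|^4$ manifestly carries the $8|m|^4$ piece with the $|D|$ factor. (The weaker bound would still suffice for the verification of \eqref{second} in the paper, so this is a bookkeeping gap rather than a conceptual one.)

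The paper avoids this by reversing the order of operations: it never takes pointwise expectations of the off-diagonal integrand. Instead it uses the exact identity \eqref{e:rank 1 H-norm}, $\|h\otimes h^*\|^2=\|h\|^4+\|h^2\|^2$ with $h=X+m$, applies the $L^2(D)$ triangle inequality $\|X+m\|^2\le 2\|X\|^2+2\|m\|^2$ (and $\|(X+m)^2\|\le\|X^2\|+\|2Xm\|+\|m^2\|$ with $2|Xm|\le|X|^2+|m|^2$) so that the deterministic terms $8(\|m\|^4+\|m^2\|^2)$ separate out cleanly \emph{before} any area factor appears, and only then applies $\|X\|^2\le\|\one\|\,\|X^2\|=\sqrt{|D|}\,\|X^2\|$ to the purely Gaussian term, followed by $\ev\|X^2\|^2=\int_D\ev|X(z)|^4\,dz\le 3(\|M\|^2+\|\hat M\|^2)$. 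If you restructure your off-diagonal estimate this way --- expand $(|X(x)|^2+|m(x)|^2)(|X(y)|^2+|m(y)|^2)$ and integrate to get $\le 8\|X\|^4+8\|m\|^4$, rather than Cauchy--Schwarzing the full product $\ev|X+m|^4$ --- your argument closes with the stated constants.
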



\begin{proof}
For $h:D\to \mathbb C$,  the norm \eqref{e:kernelnorm} gives 
\begin{equation}\label{e:rank 1 H-norm}
\|h \otimes h^{*}\|^{2}=\|h\otimes h\|^2= \|h\|^4 + \|h^2\|^2
\end{equation}
with the norms on the right hand side are for the usual $L^2(D)$. 
Using this, and triangle inequality we get 
\begin{eqnarray*}
\|R\|^2 &=& \|X+m\|^4 + \|(X+m)^2\|^2\\
&\le & 8\|X\|^4 + 8\|m\|^4+ (\|X^2\|+\|2Xm\|+\|m^2\|)^2
\end{eqnarray*}
Since $\|2Xm\|^2 \le \|X^2\|+ \|m^2\|$, we get the bound
$$
\|R\|^2 \le 8(\|X\|^4+\|X^2\|^2) + 8(\|m\|^4+\|m^2\|^2)
$$
By Cauchy-Schwarz
$$
\|X\|^2 \le \|\one\|\|X^2\|=\sqrt{|D|}\;\|X^2\|, 
$$
and 
$$\ev \|X^2\|^2 = \int_D \ev |X(z)|^4\,dz = \eta_{4,\beta}\left( \int_D  M^2(z,z)\,dz + \int_D  |\hat{M}|^2(z,z)\,dz \right)\le 3\left(\|M\|^2+\|\hat{M}\|^2\right)
$$
where $\eta_{4,\beta}=\ev |N^4|$ for a standard $\beta$-Gaussian $N$ (i.e.\ 3,2 for $\beta=1,2$ respectively).
Putting all these together, we get
\[\ev \|R\|^2 \le 8(\|m\|^4+\|m^2\|^2) +24 (1+|D|)\left(\|M\|^2+\|\hat{M}\|^2\right).
\qedhere \]
\end{proof}

Let $b_k$ be deterministic positive numbers such that   $\epsilon_k:=\frac{|b_k^2-k|}{k}$ satisfy $\sum k^{-1}\epsilon_k<\infty$. One example is to take $b_k=\sqrt{k}$.

Consider the nested matrices
$$
\left(%
\begin{array}{ccccc}
  \g     &   b_1&  &  & 0 \\
  \g      & \g  &  b_2&  &  \\
  \vdots &   &  & \ddots &  \\
         &   &  &  & b_{n-1} \\
  \g     &   & \ldots &  & \g \\
\end{array}%
\right)
$$
where the $\mathcal N$ refer to different  i.i.d.\ standard real
$(\beta=1)$ or complex $(\beta=2)$ normal random variables.

Just as in section~\ref{sec:recursionofcharpoly},  for $\lambda\in \mathbb C$, let
$X_1(\lambda)=1$, and define $X_{k+1}(\lambda)$ recursively as the
solution of the eigenvalue equation given by the $k$th row of
the matrix above (for $k< n$). In other words, let $\overrightarrow{\g_k}$ denote
the vector formed by the first $k$ entries of the $k$th row of the
nested matrices, write $\overrightarrow{X_k}(\lambda)= (X_1(\lambda),\ldots,
X_k(\lambda))$, and recursively define $X_{k+1}(\lambda)$ as the solution to the $k$th row of the eigenvalue equation
$$
\overrightarrow{\g_k}\cdot \overrightarrow{X_k}(\lambda) +
b_kX_{k+1}(\lambda) = \lambda X_k(\lambda).
$$
Let
$\mathcal F_k$ be the sigma-field generated by the first $k-1$
rows of the matrix. Define 
$$R_k(\lambda,\mu)= (X_k\otimes X_k^*)(\lambda,\mu)=X_{k}(\lambda)X_{k}^{*}(\mu), \;\; \hat{R}_k(\lambda,\mu)= (X_k\otimes X_k)(\lambda,\mu)=X_{k}(\lambda)X_{k}(\mu).$$

Then,  given $\mathcal F_k$ we have 
 \begin{eqnarray}
 X_{k+1} &\sim& \mbox{Normal} \left[ \frac{\lambda}{b_k}X_k, \frac{k}{b_k^2}M_k, \frac{k}{b_k^2}\hat{M}_k \right], \qquad \mbox{ where }  \label{e:meancov} \\
 M_k=\frac{R_1+\ldots+ R_k}k, && \hat{M}_k=\begin{cases}(\hat{R}_1+\ldots+ \hat{R}_k)/k & \mbox{ if }\beta=1,\\ 0 & \mbox{ if }\beta=2. \end{cases} \nonumber
 \end{eqnarray}
This means that conditionally on $\mathcal F_k$ the random
variable $X_{k+1}$ is a  Gaussian field  with the given mean function and 
covariance structure. For $\beta=2$  it is a Gaussian analytic function, while for $\beta=1$ it is a random anaytic function with jointly Gaussian real and imaginary parts.

In order to set  up a Hilbert space $\mathcal H$, we first fix a closed disk
$D\subset \mathbb C$, and consider the norm \eqref{e:kernelnorm}. Let $\mathcal H_{\beta}=\mathcal H$ for $\beta=2$ and $\mathcal H_{\beta}=\mathcal H\times \mathcal H$ for $\beta=1$ (the inner product on $\mathcal H\times \mathcal H$ is of course $\langle(u,v),(u',v')\rangle=\langle u,u'\rangle \langle v,v'\rangle$).

First consider the case $\beta=2$ where we can forget $\hat{R}_{k},\hat{M}_{k}$. Regard $R_k,M_k$ as random variables taking values in $\mathcal H$, and $X_k$ as an  $L^2(D)$-valued random variable.  Then
\[ \ev [R_{k+1}(\lambda,\mu) \, | \, \mathcal{F}_k] = \frac{k}{b_k^2}M_k(\lambda,\mu)
+ \frac{\lambda\overline \mu}{b_k^2} X_k(\lambda)\overline{X_k(\mu)} =
\frac{k}{b_k^2}M_k(\lambda,\mu)  + \frac{\lambda\overline \mu}{b_k^2} R_k(\lambda,\mu).
\]
Thus, condition \eqref{resample} follows because
 \begin{eqnarray*} \|\ev [R_{k+1} \, | \, \mathcal{F}_k]-M_k\|
 &\le&
\frac{|b_k^2-k|}{b_k^2}\|M_k\| + \|R_k\|O(1/k)
 \\&=&
O(\eps_k)\|M_k\| + \|R_k\|O(1/k)
 \end{eqnarray*}
To check condition \eqref{second}, we condition on $\mathcal F_k$ and use Lemma \ref{l:knorm}. Note the conditional mean and variance \eqref{e:meancov} of the Gaussian field $X_{k+1}$. We get
\begin{eqnarray*}
\ev [ \|R_{k+1}\|^2 \, | \, \mathcal F_k] &\le &
\frac{24k^2}{b_k^4}(1+|D|)\|M_k\|^2 + \frac{8}{b_k^{4}}(\|(\lambda X_{k})^2\|^2+\|(\lambda X_{k})^4\|)
\\
&\le &
c\|M_k\|^2 + \frac{c}{k^2}(\|X_{k}^2\|^2+\|X_{k}^4\|)
\\
&=&
c\|M_k\|^2 + \frac{c}{k^2}\|R_{k}\|^2,
\end{eqnarray*}
where $c$ depends on the sequence $b_k$ and  $D$ only, and the last equality follows from \eqref{e:rank 1 H-norm}. Writing $R_k$ as $kM_k-(k-1)M_{k-1}$ we get the required upper bound $c'(\|M_k\|^2 + \|M_{k-1}\|^2)$.

Theorem \ref{thm Hilbert
Polya} implies that $M_k$ converges (and uniformly on $D$) almost surely to a limit $M$.
Since local $L^2$ convergence for analytic functions implies
sup-norm convergence, the limit $M$ is analytic in its two
variables. Also
$$
\frac{R_k}{k} = \frac{k M_k -(k-1)M_{k-1}}{k} \to 0
$$
and so $X_k/b_k\to 0$.  The conditional law of $X_{k+1}$
given $\mathcal F_k$ converges to $\mbox{Normal}[0,M,0]$. As each $M_k(\lambda,\mu)$ is   analytic in the first variable and anti-analytic in the second, the same holds for $M$ and it follows that  $\mbox{Normal}[0,M,0]$ is in fact a  Gaussian analytic function, conditional upon $M$.

For $\beta=1$, we consider the $(R_{k},\hat{R}_{k}),(M_{k},\hat{M}_{k})$ as elements of $\mathcal H_{1}=\mathcal H\times \mathcal H$. The estimates obtained above for $R_{k}$ hold also for $\hat{R}_{k}$ with the obvious changes, and hence conditions \eqref{resample}, \eqref{second} are easily verified for $(R_{k},\hat{R}_{k})$. Consequently Theorem \ref{thm Hilbert
Polya} assures the existence of  $M=\lim M_{k}$ and $\hat{M}=\lim \hat{M}_{k}$ and that the conditional distribution of $X_{k+1}$ given $\mathcal F_k$ converges to $\mbox{Normal}[0,M,\hat{M}]$.

\subsection{Example: Hessenberg matrices with independent entries}

Combining the previous arguments with the central limit theorem, we can show that randomly scaled characteristic functions of the following  Hessenberg matrices converge almost surely to a random analytic function which is a mixture of Gaussian analytic functions.  
$$
\left(%
\begin{array}{ccccc}
  X     &   b_1&  &  & 0 \\
  X      & X &  b_2 &  &  \\
  \vdots &   &  & \ddots &  \\
         &   &  &  & b_{n-1} \\
  X     &   & \ldots &  & X \\
\end{array}%
\right)
$$
where $X$ are independent, identically distributed mean zero and finite fourth moment, and
$b_k =\sqrt{k} +O(k^{1/2-\eps})$. This condition could be
significantly weakened (as long as the rate of growth of
$b_{k}$'s is not too slow and not too fast); but this is not 
the main topic in the paper.

We have universality in the sense that the limit is a randomized 
Gaussian analytic function, even when the $X$ are not
Gaussian. In particular, the local behavior of the zeros is
universal; we expect that the probability of two zeros in
an disk of radius $\eps$ decays like $\eps^{6}$.

The Central Limit Theorem holds because the actual conditional covariance
matrix has to be scaled by $k$ to converge -- in
particular, a given characteristic function value has to
use more and more of the independent $X$-es. Therefore it
must be asymptotically normal; we omit the details.

\section{P\'olya's urn in Hilbert space -- proof} 

The goal of this section is to prove Theorem \ref{thm Hilbert Polya}. 

We recall a few facts about Hilbert-space valued 
random variables and martingales. 
\begin{fact} \label{hfacts}
In the following, $X_n$ will be a Hilbert-space valued
random sequence.

\renewcommand{\theenumi}{(\roman{enumi})}
\begin{enumerate}
\item \label{hmart} If $X_n$ is a martingale, and
 $\sum_{n=1}^{\infty} \ev \|X_{n+1}-X_{n}\|^2$ is finite, then there exists a limit $X$
 so that
 $\|X_n-X\|\to 0$ almost surely and in $L^2(\Omega)$.

\item \label{hnssum} If \ $\sum_n \ev \|X_n-X\|^2$ is finite, then
$\|X_n-X\|\to 0$ almost surely and in $L^2(\Omega)$.

\item \label{hsum} If \ $\sum \left(\ev \|X_n\|^2 \right)^{1/2}$ is finite,  then there exists a limit $S$ so that $\|S-\sum_{k=1}^n
X_k\|\to 0$ almost surely and in $L^2(\Omega)$.
\end{enumerate}
\end{fact}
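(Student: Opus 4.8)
These are three standard facts about $\mathcal H$-valued random variables, and the plan is to dispatch them in increasing order of difficulty, with the martingale statement \ref{hmart} carrying the real content. For part \ref{hnssum} I would apply Tonelli to obtain $\ev\sum_n\|X_n-X\|^2=\sum_n\ev\|X_n-X\|^2<\infty$, so that $\sum_n\|X_n-X\|^2<\infty$ almost surely; since the summands of a convergent series tend to zero, $\|X_n-X\|\to 0$ a.s., while $L^2$ convergence is immediate because the terms of the convergent series $\sum_n\ev\|X_n-X\|^2$ vanish. Part \ref{hsum} is an absolute-convergence argument: Jensen gives $\ev\|X_n\|\le(\ev\|X_n\|^2)^{1/2}$, so $\ev\sum_n\|X_n\|=\sum_n\ev\|X_n\|<\infty$ and hence $\sum_n\|X_n\|<\infty$ a.s.; completeness of $\mathcal H$ then forces the partial sums $S_n=\sum_{k\le n}X_k$ to converge a.s.\ to some $S$. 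For the $L^2$ half I would note that $(S_n)$ is Cauchy in $L^2(\Omega;\mathcal H)$, since the triangle inequality there bounds $\|S_n-S_m\|_{L^2}$ by the series tail $\sum_{k=m+1}^{n}(\ev\|X_k\|^2)^{1/2}$, and the $L^2$ limit agrees with the a.s.\ one.

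The heart of the matter is part \ref{hmart}, which I would split into its $L^2$ and almost-sure halves. The $L^2$ half rests on the orthogonality of martingale increments: setting $d_k=X_{k+1}-X_k$, for $j<k$ the variable $d_j$ is $\mathcal F_k$-measurable, so conditioning and using $\ev[d_k\mid\mathcal F_k]=0$ gives $\ev\langle d_j,d_k\rangle=0$; therefore $\ev\|X_n-X_m\|^2=\sum_{k=m}^{n-1}\ev\|d_k\|^2$ is the tail of the convergent series $\sum_k\ev\|d_k\|^2$, making $(X_n)$ Cauchy in $L^2(\Omega;\mathcal H)$ with some limit $X$. For the almost-sure half I would apply the Hilbert-space form of Doob's $L^2$ maximal inequality to the non-negative submartingale $\|X_{m+\cdot}-X_m\|$, obtaining $\ev\big[\sup_{j\ge m}\|X_j-X_m\|^2\big]\le 4\sum_{k\ge m}\ev\|d_k\|^2\to 0$ as $m\to\infty$; since the right side controls the non-increasing tail oscillation of the sequence, that oscillation vanishes a.s., so $(X_n)$ is almost surely Cauchy and converges a.s.\ to the same limit $X$.

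The only genuine obstacle is this last step. I must justify that $\|X_n-X_m\|$ is a true submartingale, which follows from conditional Jensen applied to the convex norm, and that Doob's maximal inequality holds in the vector-valued setting; both are standard but deserve to be pinned down explicitly, after which identifying the a.s.\ limit with the $L^2$ limit is routine. Parts \ref{hnssum} and \ref{hsum}, by contrast, reduce entirely to Tonelli together with completeness of $\mathcal H$.
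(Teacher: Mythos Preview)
Your proposal is correct, and for parts \ref{hnssum} and \ref{hsum} it matches the paper's approach closely: the paper, like you, uses Tonelli for \ref{hnssum} to get $\sum_n\|X_n-X\|^2<\infty$ almost surely, and for \ref{hsum} it uses the triangle inequality in $L^2(\Omega;\mathcal H)$ to show the partial sums are Cauchy, then bounds $\|S-\sum_{k\le n}X_k\|$ by the tail $\sum_{k>n}\|X_k\|$, a monotone sequence that tends to zero in expectation and hence almost surely.

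The one genuine difference is part \ref{hmart}: the paper does not prove it at all, treating the $L^2$-bounded Hilbert-space martingale convergence theorem as a known fact. Your argument via orthogonality of increments for the $L^2$ half and Doob's maximal inequality applied to the submartingale $\|X_{m+\cdot}-X_m\|$ for the almost-sure half is the standard proof and is sound; the only points requiring care, as you note, are that conditional Jensen for the convex norm makes $\|X_n-X_m\|$ a submartingale and that Doob's inequality carries over to this scalar submartingale, both of which are routine. So your write-up actually supplies more than the paper does here.
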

\begin{proof}

 \ref{hnssum} The claim implies that $\sum_n \|X_n\|^2$ is
 finite almost surely (since its expected value is), so $\|X_n\|\to
 0$ almost surely.

 \ref{hsum} By the triangle inequality, we see that $X_n$
is a Cauchy sequence in $L^2(\Omega)$ and therefore it
converges to some $S$. Further,
\[
 \|S-\sum_{k=1}^n X_k\| \le
 \sum_{k=n+1}^{\infty} \|X_k\|
\]
 the latter being a monotone sequence that converges to
$0$ in expectation, hence almost surely.
\end{proof}

We are now ready for the main proof. 

\begin{proof}[Proof of Theorem \ref{thm Hilbert Polya}] Let
$$M_{k+1}-M_k = A_k+B_k, \qquad A_k= M_{k+1}-\ev[M_{k+1}|\mathcal
F_k], \qquad B_k=\ev[M_{k+1}|\mathcal F_k] -M_k
$$
be the Doob decomposition of the process $M_k$; the $A_k$
are martingale increments and the $B_k$ are increments of a
predictable process. It suffices to show that $\sum A_k$
and $\sum B_k$ converge. We have
$$
A_k = \frac{R_{k+1}-\ev [R_{k+1}\,|\,\mathcal F_k]}{k+1}
$$
which gives
\begin{eqnarray}\nonumber
\ev[\|A_k\|^2|\mathcal F_k] &=&
\frac{1}{(k+1)^2}\left(\ev[\|R_{k+1}\|^2\,|\,\mathcal F_k]-
\|\ev[R_{k+1}|\mathcal F_k]\|^2\right) \\
&\le& \frac{1}{(k+1)^2}\ev[\|R_{k+1}\|^2\,|\,\mathcal F_k]
=\frac{1}{k^2}O(1+ \|M_{k-1}\|^2+ \|M_k\|^2) \label{in1}
\end{eqnarray}
and
\begin{eqnarray}\label{in2}
B_k = \frac{\ev [R_{k+1}-M_k|\mathcal F_k]}{k+1}, \qquad
\|B_k\|\le (1+\|M_k\|) O(\eps_k/k)+ \|R_k\|O(1/k^2)
\end{eqnarray}
First we will show that $\ev \|M_k\|^2$ is bounded. Then it
will follow from Fact~\ref{hfacts}~\ref{hmart},  that $\sum A_k$ converges in $L^2$ and a.s.,
because it is an $L^2$-bounded martingale. Then we will
show that $\sum (\ev \|B_k\|^2)^{1/2}$ is finite. Fact~\ref{hfacts}~\ref{hsum} then 
implies that $B_1+\ldots+B_k$ is Cauchy in $L^2$ and $\ev
\sum \|B_k\|$  is finite. Thus  $\sum B_k$ converges a.s.
and in $L^2$. We write
\begin{equation}\label{Mk+1}
\ev \|M_{k+1}\|^2 =\ev \|M_k\|^2 + \ev \|A_k\|^2 + \ev
\|B_k\|^2 + 2 \Re\left\{\ev \langle M_k,B_k\rangle \right\} + 2 \ev \langle
M_k+B_k,A_k\rangle
\end{equation}
the last term vanishes, because $\ev[A_k|\mathcal F_k]=0$
and $M_k,B_k\in \mathcal F_k$. By \eqref{in2} we have
\begin{eqnarray}\nonumber
\ev \|B_k\|^2 &\le& (1+ \ev \|M_k\|^2)O(\eps_k^2/k^2) + \ev
\|R_k\|^2 O(1/k^4) \\ &\le& (1+ \ev \|M_{k-1}\|^2+\ev
\|M_k\|^2) O(\eps_k^2/k^2+1/k^4) \label{norm Bk squared}
\end{eqnarray}
The last inequality follows from the bound \eqref{second}.
Formula \eqref{norm Bk squared} and Cauchy-Schwarz imply
that if $\ev \|M_k\|^2$ is bounded then $\sum \ev \|B_k\|$
is finite.

 Further,  two applications of Cauchy-Schwarz give
\begin{eqnarray} \nonumber
|\ev \langle M_k,B_k \rangle| &\le& \ev \|M_k\|\|B_k\|
\\&\le& \left(\ev \|M_k\|^2 \ev \|B_k\|^2  \right)^{1/2}
\nonumber
\\&\le& (1+\ev
\|M_k\|^2 + \ev \|M_{k-1}\|^2)O(\eps_k/k+1/k^2)
\label{MkBk}
\end{eqnarray}
where the last inequality follows from \eqref{norm Bk
squared}. Using  \eqref{in1}, \eqref{norm Bk squared} and
\eqref{MkBk} in \eqref{Mk+1} we finally get
$$
\ev \|M_{k+1}\|^2 \le\ev \|M_k\|^2 + (1+\ev \|M_k\|^2 + \ev
\|M_{k-1}\|^2)O(\eps_k/k+1/k^2),
$$
so with the notation $$y_k=\max_{1\le \ell \le k} \ev
\|M_{\ell}\|^2$$ we have
\begin{eqnarray*}
y_{k+1} &\le& y_k + (1+y_k +y_{k-1})O(\eps_k/k+1/k^2) \\
&\le& y_k(1+O(\eps_k/k+1/k^2) )
\end{eqnarray*}
which shows
$$
y_k \le y_1 \prod_{\ell=2}^k (1+O(\eps_\ell/\ell+1/\ell^2))
$$
in particular $\ev \|M_k\|^2$ is bounded, completing the
proof.
\end{proof}

\section{Analysis of the Ginibre ensembles}
In this section we  prove Theorem~\ref{thm:convergenceofQn} and Theorem~\ref{thm:convergenceofQnforrealginibre}. By the Lanczos algorithm as explained in Section~\ref{sec:recursionofcharpoly},  it suffices to show that the characteristic polynomials of the Hessenberg matrices on the left side of \eqref{eq:hessenbergformofginibre} converge in distribution to a random analytic function $Q$, and that there exists a random positive definite Hermitian
function $K(z, w)$ so that given $K$ the function $Q(z)$ is a
Gaussian field with covariance kernel $K$. 

  If we condition on all  $b_k^2:=\frac{1}{\beta}\chi_{\beta k}^2$ variables on the first super diagonal, then the matrix  reduces to the semi-random Hessenberg matrix of Example~\ref{ex:semirandom}. We leave it to the reader to check that $k^{-2}|b_k^2-k|$ is summable, a.s., so that the conclusions of  Example~\ref{ex:semirandom} apply. Thus, conditional on all $b_k^2$, the limit $Q$ of $Q_n$ (in distribution) exists, and is a mixture of Gaussian analytic functions. This implies that unconditionally also, $Q$ is a mixture of Gaussian analytic functions.

   But we want to be able to study the properties of the random covariance kernel. Hence we directly analyze the Hessenberg matrix with the chi-squared variables in place, and get the same conclusions.

\begin{proof}[Proof of Theorem \ref{thm:convergenceofQn}]

Consider the nested matrices
$$
\left(%
\begin{array}{ccccc}
  \g     &   b_1&  &  & 0 \\
  \g      & \g  &  b_2&  &  \\
  \vdots &   &  & \ddots &  \\
         &   &  &  & b_{n-1} \\
  \g     &   & \ldots &  & \g \\
\end{array}%
\right)
$$
where the $\mathcal N$ refer to different  real $(\beta=1)$ or complex $(\beta=2)$ normal random
variables and $b_k^2$ to  Gamma$(k\beta/2,\beta/2)$ random variables, and all  entries are independent. Note that Gamma$(k\beta/2,\beta/2)$ is the distribution of the length-squared of a
$k$-dimensional standard real or complex Gaussian vector, for $\beta=1,2$
respectively.

Define $\overrightarrow{\g_k}$ as the vector formed by the first
$k$ entries of the $k$th row of the nested matrices. As in Example~\ref{ex:semirandom},  for $\lambda\in \mathbb C$, let
$X_1(\lambda)=1$, and for $k<n$, recursively define $X_{k+1}(\lambda)$  as
the solution to
$$
\overrightarrow{\g_k}\cdot \overrightarrow{X_k}(\lambda) +
b_kX_{k+1}(\lambda) = \lambda X_k(\lambda)
$$
where
$$
\overrightarrow{X_k}(\lambda)= (X_1(\lambda),\ldots,
X_k(\lambda)).
$$
Then $X_{k+1}$ is the characteristic function of the top $k\times k$ principal submatrix.

We note that
$$
\ev b_k^{-2} = \left\{\begin{array}{ll}
                             \frac{1}{k-1}, & \beta =2 ,\; k\ge 2 \\
                             \frac{1}{k-2}, & \beta =1 ,\;  k\ge 3
                           \end{array}\right.
$$
and
$$
\ev b_k^{-4} = \left\{\begin{array}{ll}
                             \frac{1}{(k-1)(k-2)}, & \beta =2 ,\; k\ge 3 \\
                             \frac{1}{(k-2)(k-4)}, & \beta =1 ,\;  k\ge
                             5
                           \end{array}\right.
$$
And in light of this we will set $k_0=1+4/\beta$. Let
$\mathcal F_k$ be the sigma-field generated by the first
$k$ rows of the matrix. Define
$$R_k(\lambda,\mu)= (X_k\otimes X_k^*)(\lambda,\mu):=X_k(\lambda)\overline{X_k(\mu)}, \;\; \hat{R}_k(\lambda,\mu)= (X_k\otimes X_k)(\lambda,\mu):=X_k(\lambda)X_k(\mu). $$

Then,  given $\mathcal F_k$ and $b_k$ we have
 \begin{eqnarray}
 X_{k+1} &\sim& \mbox{Normal} \left[ \frac{\lambda}{b_k}X_k, \frac{k}{b_k^2}M_k, \frac{k}{b_k^2}\hat{M}_k \right], \qquad \mbox{ where }  \label{e:meancov} \\
 M_k=\frac{R_1+\ldots+ R_k}k, && \hat{M}_k=\begin{cases}(\hat{R}_1+\ldots+ \hat{R}_k)/k & \mbox{ if }\beta=1,\\ 0 & \mbox{ if }\beta=2. \end{cases} \nonumber
 \end{eqnarray}
This means that conditionally on $\mathcal F_k$ and $b_k$
the random variable $X_{k+1}$ is a Gaussian field with
 the given mean and covariance.

In order to set  up a Hilbert space, we first fix a closed
disk $D\subset \mathbb C$. Recall the Hilbert space $\mathcal H$ defined in \eqref{e:kernelnorm} and the spaces $\mathcal H_{2}=\mathcal H$ and $\mathcal H_{1}=\mathcal H\times \mathcal H$ that were introduced in section~\ref{ex:semirandom}. Regard $R_k,M_k,\hat{R}_{k},\hat{M}_{k}$ as 
$\mathcal H$-valued random variables, and $X_k$ as an  $L^2(D)$-valued random variable.

Theorem~\ref{thm Hilbert Polya} will be applied to the $\mathcal H_{2}$-valued random variables $R_{k}$ for $\beta=2$ and to the $\mathcal H_{1}$-valued random variables $(R_{k},\hat{R}_{k})$ for $\beta=1$.
 Then \eqref{resample} holds as
\begin{equation}\label{eq:conditionalmeanofRk}
 \ev [R_{k+1} \, | \, \mathcal{F}_k] =\ev[b_k^{-2}]
\left( kM_k + \lambda\overline \mu X_kX_k^* \right)=
\frac{k}{k-\frac{2}{\beta}}M_k + \frac{\lambda\overline
\mu}{k-\frac{2}{\beta}} R_k
\end{equation}
where $\lambda, \mu$ are the arguments of $R_{k+1}$.
Thus
 \begin{eqnarray*} \|\ev [R_{k+1} \, | \, \mathcal{F}_k]-M_k\|
 &=&
\|M_k\|O(1/k)  + \|R_k\|O(1/k)
 \end{eqnarray*}
 which proves condition~\eqref{resample} for $\beta=2$. 
 Similarly, $\|\ev [\hat{R}_{k+1} \, | \, \mathcal{F}_k]-\hat{M}_k\|
 =
\|\hat{M}_k\|O(1/k)  + \|\hat{R}_k\|O(1/k)$ from which we get
\begin{eqnarray*} \|\ev [(R_{k+1},\hat{R}_{k+1}) \, | \, \mathcal{F}_k]-(M_k,\hat{M}_{k})\|
 &=&
\|(M_k,\hat{M}_{k})\|O(1/k)  + \|(R_k,\hat{R}_{k})\|O(1/k)
 \end{eqnarray*}
 which proves condition~\eqref{resample} for $\beta=1$.
 
We now proceed to check condition \eqref{second}. First, by Lemma \ref{l:knorm} we have 
\begin{eqnarray*}
\ev [ \|R_{k+1}\|^2 \, | \, \mathcal F_k,b_k] &\le &
\frac{24k^2}{b_k^4}(1+|D|)\|M_k\|^2 + \frac{8}{b_k^{4}}(\|(\lambda X_{k})^2\|^2+\|(\lambda X_{k})^4\|)
\\
&\le &
\frac{c}{b_k^4} \left(k^2\|M_k\|^2 + \|X_{k}^2\|^2+\|X_{k}^4\|)\right)
\\
&=&
\frac{c}{b_k^4} \left(k^2\|M_k\|^2 + \|R_{k}\|^2\right),
\end{eqnarray*}
where $c$ depends on $D$ only, and the last equality follows from \eqref{e:rank 1 H-norm}. Writing $R_k$ as $kM_k-(k-1)M_{k-1}$ we get the upper bound 
\begin{eqnarray*}
\ev [ \|R_{k+1}\|^2 \, | \, \mathcal F_k,b_k] &\le &
\frac{c'k^2}{b_k^4} (\|M_k\|^2 + \|M_{k-1}\|^2).
\end{eqnarray*}
we complete checking condition \eqref{second} for $\beta=2$ by noting that
\begin{eqnarray*}
\ev [ \|R_{k+1}\|^2 \, | \, \mathcal F_k] &=& \ev[\ev [ \|X_{k+1}\|^4 \, | \, \mathcal F_k,b_k]|\mathcal F_k] \\
&\le& 
c'k^2\ev[b_k^{-4}] (\|M_k\|^2 + \|M_{k-1}\|^2)\\
&=& O(\|M_k\|^2 + \|M_{k-1}\|^2)
\end{eqnarray*}
For $\beta=1$, we make similar computation for $\hat{R}_{k+1}$ and combine it with the above to verify  condition~\eqref{second}.

Thus, Theorem \ref{thm
Hilbert Polya} implies that $(M_k,\hat{M}_{k})$ converges almost surely
to a limit $(M,\hat{M})$. Of course $\hat{M}=0$ for  $\beta=2$. Since local $L^2$ convergence for analytic
functions implies sup-norm convergence, the limit $M$ is
analytic in the first variable and anti-analytic in the second variable while $\hat{M}$ is analytic in both its arguments. Also
$$
\frac{R_k}{k} = \frac{k M_k -(k-1)M_{k-1}}{k} \to 0
$$
and so $X_k/b_k\to 0$. Thus, the conditional distribution of
$X_{k+1}$ given $\mathcal F_k$ converges to $\mbox{Normal}[0,M,\hat{M}]$. For $\beta=2$ this is a  Gaussian analytic function with random covariance kernel $M$ while for $\beta=1$ it is a random analytic function with jointly Gaussian real and imaginary parts.
\end{proof}

\section{The mean of the random covariance kernel}
We have shown that the limit of characteristic polynomials is a centered Gaussian analytic function with random covariance kernels $M,\hat{M}$. Now we try to understand the distribution of $M$ itself. We are able to calculate the first two moments. We show the calculation for $M$ only, which is sufficient for $\beta=2$.

\noindent{\bf The expectation of $M(\lambda,\mu)$ :} 
From equation~(\ref{eq:conditionalmeanofRk}) we have
$$ \ev [R_{k+1} \, | \, \mathcal{F}_k] =\ev[b_k^{-2}]
\left( R_1+\ldots +R_k + \lambda\overline \mu R_k \right)
$$
so we would like to set $r_k$ to be the $\ev R_k$ to get
the recursion
\begin{equation}\label{eq:rrecursion}
r_{k+1} =\frac{ r_1+\ldots +r_k + \lambda\overline \mu r_k
}{k-\frac{2}{\beta}}
\end{equation}
but this is invalid, because $\ev[R_k]=\infty$ simply because $\ev[b_k^{-2}]=\infty$ for $k\le 2/\beta$.
Therefore we set
\[
 r_k=\ev [b_1^2b_2^2R_k] \hspace{3mm}\mbox{ for }\beta=1,\qquad r_k=\ev [b_1^2 R_k] \hspace{3mm} \mbox{ for }\beta=2.
\]
These $r_k$ are finite and the  recursion (\ref{eq:rrecursion})  above holds for these quantities, for $k>2/\beta$.

Fix $z=\lambda \overline \mu$ and use
\eqref{eq:rrecursion} to write
\begin{eqnarray*}
r_1+\ldots +r_k &=& (k-2/\beta)r_{k+1}-zr_k\\
 r_1+\ldots +r_{k-1}& =& (k-1-2/\beta)r_{k}-zr_{k-1}
\end{eqnarray*}
Taking differences we get

\[ r_{k+1}-r_k = \frac{z}{k-\frac{2}{\beta}}(r_k-r_{k-1}) =
\begin{cases}
(r_3-r_2) \frac{z^{k-2}}{(k-2)!}& \text {when } \beta =1 \\
(r_2-r_1) \frac{z^{k-1}}{(k-1)!}& \text {when } \beta =2
\end{cases}
\]
Summing these  we get
\begin{equation}\label{eq:expressionorrk}  \underline{\beta=2:} \;\;\;  r_{k+1} - r_1 = (r_2-r_1)\sum_{j=0}^{k-1} \frac{z^{j}}{j!}. \qquad
  \underline{\beta=1:} \;\;\;  r_{k+1} - r_2 = (r_3-r_2)\sum_{j=0}^{k-2} \frac{z^{j}}{j!}.
 \end{equation}
By direct computation we have the initial values
\[
r_1=\begin{cases} 1 & \mbox{ if }\beta=2. \\ 2 & \mbox{ if }\beta=1.\end{cases} \qquad
r_2=\begin{cases} 1+\lambda \overline \mu & \mbox{ if }\beta=2. \\ 2+2 \lambda \overline \mu & \mbox{ if }\beta=1.\end{cases} \qquad
r_3 = 2+2 \lambda \overline \mu + (\lambda \overline \mu)^2 \hspace{2mm} \mbox{ for  }\beta=1.
\]
 Plugging these into (\ref{eq:expressionorrk}), we get
\[ r_{k+1} = \begin{cases} 1 + z\sum_{j=0}^{k-1}\frac{z^j}{j!} \rightarrow 1+ze^z   & \mbox{ for }\beta=2. \\
2+2z+ z^2\sum_{j=0}^{k-2}\frac{z^j}{j!} \rightarrow 2+2z+z^2e^z   & \mbox{ for }\beta=1.
\end{cases}
\]
as $k\rightarrow \infty$. Thus the limiting covariance kernel satisfies
\[ \ev \left[M(\lambda,\mu)\right] = \begin{cases} 1+\lambda \overline{\mu}e^{\lambda \overline{\mu}} & \mbox{ for }\beta=2.  \\
2+2\lambda \overline{\mu}+(\lambda \overline{\mu})^2e^{\lambda \overline{\mu}} & \mbox{ for }\beta=1.
\end{cases}
\]
Contrast this with the planar Gaussian analytic function $f(z):=\sum_n a_n \frac{z^n}{\sqrt{n!}}$ which has covariance kernel $e^{z\overline w}$, for both real and complex i.i.d Gaussian coefficients $a_n$ (lest this sound like a contradiction, one must again consider the direct second moment  $\ev [f(z)f(w)]$ which is again equal to $e^{z\overline w}$ for the real case but vanishes identically in the complex case!).
%

\section{Second moment of the covariance kernel}

In this section we consider the second moment of the covariance kernel.
Just like the first moment case, we have to multiply by random constants
for this moment to exist. 

Fix $\beta=2$ and let $\alpha_k=\ev[b_1^4b_2^4R_k^2]$,
$\beta_k=\ev[b_1^4b_2^4S_k]$, where
$S_k=\sum\limits_{j=1}^{k-1}R_jR_k$. We shall get recursions for
these quantities, by first evaluating conditional expectations given
${\mathcal F}_k$. To this end, set $z=|\lambda|^2$ and observe
that
\begin{eqnarray*}
\ev[R_{k+1}^2|{\mathcal F}_k] &=& \ev[b_k^{-4}]\left(z^2R_k^2+2k^2M_k^2+4zkM_kR_k \right) \\
 &=& \frac{z^2R_k^2+2k^2M_k^2+4zkM_kR_k}{(k-1)(k-2)} \\
 &=& \frac{z^2R_k^2+2\sum_{j=1}^kR_j^2 + 4zS_k + 4zR_k^2+ 4\sum_{j=1}^kS_j}{(k-1)(k-2)}. \\
\ev[S_{k+1} |{\mathcal F}_k] &=& \sum_{i=1}^k R_i \ev[R_{k+1}|{\mathcal F}_k] \\
 &=& \left(\sum_{j=1}^k R_j\right)\frac{1}{k-1}\left(kM_k+zR_k\right) \\
 &=& \frac{1}{k-1}\left[ \left(\sum_{j=1}^kR_j\right)^2 + z S_k +z R_k^2\right] \\
 &=& \frac{\sum_{j=1}^k R_j^2 + 2\sum_{j=1}^k S_j + zS_k+z R_k^2}{k-1}.
\end{eqnarray*}
Multiplying by $b_1^4b_2^4$ and taking expectations, we get that
for $k\ge 3$
\begin{eqnarray*}
\alpha_{k+1} &=&
\frac{z(z+4)\alpha_k+2\sum_{j=1}^k\alpha_j+4z\beta_k+4\sum_{j=1}^k\beta_j}{(k-1)(k-2)}.
\\
\beta_{k+1} &=& \frac{\sum_{j=1}^k \alpha_j + 2\sum_{j=1}^k \beta_j + z\beta_k + z\alpha_k}{k-1}.
\end{eqnarray*}
These can be rephrased as
\begin{eqnarray*}
2\sum_{j=1}^k\alpha_j + 4\sum_{j=1}^k\beta_j &=& (k-1)(k-2)\alpha_{k+1}- z(z+4)\alpha_k -4z\beta_k. \\
\sum_{j=1}^k \alpha_j + 2\sum_{j=1}^k \beta_j  &=& (k-1)\beta_{k+1}- z\beta_k-z\alpha_k.
\end{eqnarray*}
For smaller $k$, it is possible to compute manually the following
values
$$
\begin{array}{c|c|c}
   k & \alpha_k & \beta_k \\ \hline
    1&12 & 0\\
    2 & 6\,\left( 2 + 4\,z + z^2 \right)  & 6\,\left( 1 + z \right) \\
    3 & 12 + 24\,z + 24\,z^2 + 8\,z^3 + z^4 & 2\,\left( 6 + 9\,z + 6\,z^2 + z^3 \right) \\
    4 & 72 + 144\,z + 144\,z^2 + 96\,z^3 + 33\,z^4 + 6\,z^5 + \frac{1}{2}z^6 &
   36 + 60\,z + 48\,z^2 + 24\,z^3 + \frac{11}2\,z^4 + \frac12z^5
\end{array}
$$
The $k=4$ cases in fact follow from the recursions above. For
$k\ge 4$, by writing the same equation for $k-1$ and taking the
difference we get the first two of the following equations. The
last one follows by taking the difference of the above two.
\begin{eqnarray*}
2\alpha_k+ 4\beta_k &=& (k-1)(k-2)\alpha_{k+1}-(k-2)(k-3)\alpha_k- z(z+4)\alpha_k+z(z+4)\alpha_{k-1} \\
 & &\hspace{1cm}-4z\beta_k+4z\beta_{k-1}. \\
\alpha_k + 2\beta_k &=& (k-1)\beta_{k+1}-(k-2)\beta_{k}- z\beta_k+z\beta_{k-1}-z(\alpha_k-\alpha_{k-1}). \\
2(k-1)\beta_{k+1}- 2z\beta_k &=& (k-1)(k-2)\alpha_{k+1}- z(z+4)\alpha_k -4z\beta_k+2z\alpha_k.
\end{eqnarray*}

Using symbolic computations, we found that the coefficients of
$z^j$ is the same in all $a_k$ for $k\ge j+3$. Similarly, the
coefficients of $b_k-b_{k-1}$ stabilize  and they are exactly half
the corresponding stable coefficients of $a_k$. The first few
coefficients are
$$a_\infty(z)= 72 + 192\,z
+ \frac{802\,z^2}{3} + \frac{776\,z^3}{3} + \frac{3799\,z^4}{20} +
\frac{9967\,z^5}{90} +
  \frac{666847\,z^6}{12600} + \frac{11161\,z^7}{525} + \frac{474659\,z^8}{64800} + \ldots
$$
however, we have not been able to find a closed-form formula for
the coefficients or $a_\infty(z)$. The final answer is
$$\lim_{k\to\infty} \ev M_k(\lambda,\lambda)^2=a_\infty(|\lambda |^2)/2.$$

\noindent{\bf Special case $z=0$:} The equations become
\begin{eqnarray*}
2\alpha_k+ 4\beta_k &=& (k-1)(k-2)\alpha_{k+1}-(k-2)(k-3)\alpha_k. \\
\alpha_k + 2\beta_k &=& (k-1)\beta_{k+1}-(k-2)\beta_{k}. \\
2\beta_{k+1} &=& (k-2)\alpha_{k+1}.
\end{eqnarray*}
Substituting the last one into the second, we get
\[ (k-1)(k-2)\alpha_{k+1}=\alpha_k(2+2(k-3)+(k-2)(k-3)) \]
 which implies $\alpha_{k+1}=\alpha_k$. Then $\beta_k=\frac{k-3}{2}\alpha_3$ for all $k\ge 3$. Thus
 \[ \ev M_k^2 =\frac{1}{k^2}\sum_{j=1}^k \alpha_j^2+2\beta_j \rightarrow \alpha_3. \]


\bigskip

\noindent {\bf Acknowledgements.} The second author was supported by the NSERC Discovery Accelerator Grant and the Canada Research Chair program.

\bibliography{gingaf}

\bigskip

\noindent {\sc Manjunath Krishnapur.} Department of Mathematics, Indian Institute of Science, Bangalore 560012, India.
{\tt manju\@@math.iisc.ernet.in}

\bigskip

\noindent {\sc B\'alint Vir\'ag.} Departments of Mathematics
and Statistics. University of Toronto, M5S 2E4 Canada.
{\tt balint\@@math.toronto.edu}

\end{document}